\newtheorem{theorem}{Theorem}[section]
\newtheorem{lemma}[theorem]{Lemma}
\newtheorem{proposition}[theorem]{Proposition}
\newtheorem{assumption}{Assumption}
\newtheorem{example}[theorem]{Example}
\newtheorem{algorithm}{Algorithm}
\newcommand{\R}{\mathbb{R}}
\def\EE{\mathbb E}
\def\P{\mathbb P}
\def\d{\delta}
\def\l{\langle}
\def\r{\rangle}
\begin{document}
	
\title[An adaptive heavy ball method for ill-posed inverse problems]
	{An adaptive heavy ball method for ill-posed inverse problems}
	
	%\author{author\\university} 
	%\CTEXoptions[today=old]	

\author{Qinian Jin}
\address{Mathematical Sciences Institute, Australian National
University, Canberra, ACT 2601, Australia}
\email{qinian.jin@anu.edu.au} \curraddr{}
	
\author{Qin Huang}
\address{School of Mathematics and Statistics, Beijing Institute of Technology, 
Beijing, 100081, China}
\email{huangqin@bit.edu.cn}

\subjclass[2010]{65J20, 65J22, 65J15, 47J06}
	
	%\date{May 30, 2009}
	
\keywords{Ill-posed inverse problems, adaptive heavy ball method, step-sizes, momentum coefficients, convergence}
	
\begin{abstract}
In this paper we consider ill-posed inverse problems, both linear and nonlinear, by a heavy ball 
method in which a strongly convex regularization function is incorporated to detect the feature of 
the sought solution. We develop ideas on how to adaptively choose the step-sizes and the momentum 
coefficients to achieve acceleration over the Landweber-type method. We then analyze the method and 
establish its regularization property when it is terminated by the discrepancy principle. Various 
numerical results are reported which demonstrate the superior performance of our method over the 
Landweber-type method by reducing substantially the required number of iterations and the computational time.
\end{abstract}
	
\def\d{\delta}
\def\P{\mathbb{P}}
\def\l{\langle}
\def\r{\rangle}
\def\la{\lambda}
\def\EE{{\mathbb E}}
\def\R{{\mathcal R}}
\def\a{\alpha}
\def\p{\partial}
\def\ep{\varepsilon}
\def\PP{{\mathbb P}}

\keywords{Ill-posed inverse problems, adaptive heavy-ball method, step-size, momentum coefficients, convergence}
	%
	%\subjclass[2010]{65J20, 47J06, 47A52, 49J40}
	%
	
	%\date{\today}

	\maketitle
	
\section{\bf Introduction}

Consider an operator equation of the form 
\begin{align}\label{AHB.1}
F(x) = y,
\end{align}
where $F : \mbox{dom}(F)\subset X \to Y$ is an operator between two real Hilbert spaces $X$ and $Y$ 
with domain $\mbox{dom}(F)$. The operator $F$ could be either linear or nonlinear. Throughout, we 
always assume that (\ref{AHB.1}) has a solution, i.e. $y \in \mbox{Ran}(F)$, the range of $F$. 
We are interested in the situation that (\ref{AHB.1}) is ill-posed in the sense that 
its solution does not depend continuously on the right hand data. Such situation can happen in a 
broad range of applications of inverse problems. In practical scenarios, data are acquired 
through experiments and thus the exact data may not be available; instead we only have measurement 
data corrupted by noise. Due to the ill-posed nature of the underlying problem, it is therefore 
important to develop algorithms to produce stable approximate solutions of (\ref{AHB.1}) using 
noisy data.

In practical applications, some {\it a priori} information, such as non-negativity, sparsity and 
piecewise constancy, on the sought solution, is often available. Utilizing such feature 
information in algorithm design can significantly improve the reconstruction accuracy. By 
incorporating the available {\it a priori} information, one may pick a proper, lower semi-continuous, 
strongly convex function $\R: X \to (-\infty, \infty]$ as a regularization function to determine 
a solution of (\ref{AHB.1}) with the desired feature. Let $y^\d$ be a noisy data satisfying 
$$
\|y^\d - y\| \le \d
$$
with a known noise level $\d>0$. The Landweber-type method 
\begin{align}\label{Land}
\begin{split}
\xi_{n+1}^\d & = \xi_n^\d - \a_n^\d L(x_n^\d)^* (F(x_n^\d) - y^\d), \\
x_{n+1}^\d & = \arg\min_{x\in X} \left\{\R(x) - \l \xi_{n+1}^\d, x\r \right\},
\end{split}
\end{align}
has been considered in \cite{JW2013} for solving (\ref{AHB.1}), where $\a_n^\d>0$ is the 
step-size and $\{L(x): x \in \mbox{dom}(F)\}$ is a family of properly chosen bounded 
linear operators. In case $F$ is Fr\'{e}chect differentiable at $x$, we may take $L(x) = F'(x)$, 
the Fr\'{e}chet derivative of $F$ at $x$; otherwise, $L(x)$ should be appropriately chosen as a 
substitute for the non-existent Fr\'{e}chet derivative. This method has been thoroughly analyzed in \cite{Jin2016,JW2013} under
the constant step-size 
\begin{align}\label{css}
\a_n^\d = \frac{\mu_0}{L^2},
\end{align}
when an upper bound $L$ of $\|L(x)\|$ around the sought solution is available, and the adaptive 
step-size
\begin{align}\label{ass}
\a_n^\d = \min\left\{\frac{\mu_0\|F(x_n^\d) - y^\d\|^2}{\|L(x_n^\d)^*(F(x_n^\d)-y^\d)\|^2}, \mu_1\right\}
\end{align}
with suitable choices of the parameters $\mu_0$ and $\mu_1$, and the regularization property 
has been established when the iteration is terminated by the discrepancy principle. Note that, 
when $\R(x) = \|x\|^2/2$, the corresponding method with $\a_n^\d$ chosen by (\ref{css}) is the 
classical Landweber method which has been analyzed in \cite{EHN1996,HNS1995}, and the corresponding 
method with $\a_n^\d$ chosen by (\ref{ass}) is the minimal error method (\cite{KNS2008}). 
Extensive numerical simulations have demonstrated that the Landweber-type method (\ref{Land}) 
is a slowly convergent approach, often necessitating a large number of iteration steps before 
termination by the discrepancy principle. It is therefore important to develop strategies 
for accelerating the Landweber-type method (\ref{Land}) while maintaining its simple implementation 
feature. 

For linear ill-posed problems in Hilbert spaces, Brakhage used in \cite{B1987} the Jacobi 
polynomials to construct a method, which is known as the $\nu$-method, to accelerate the 
classical Landweber iteration. Additionally, in \cite{H1991}, a family of accelerated Landweber 
iterations was developed by employing orthogonal polynomials and the spectral theory of self-adjoint 
operators. However, the acceleration strategy using orthogonal polynomials is scarcely  
applicable for the Landweber-type method (\ref{Land}) when the forward operator $F$ is 
nonlinear or the regularization function $\R$ is non-quadratic. In \cite{HJW2015,SS2009} the 
sequential subspace optimization strategies were employed to accelerate the Landweber iteration. 
The implementation of this strategy however requires to solve an additional optimization 
problem to determine the weighted coefficients of the search directions at each iteration 
step which unfortunately has the drawback of slowing down the method. 

On the other hand, to determine a solution of well-posed minimization problem 
\begin{align}\label{MP}
\min_{x\in X} f(x)
\end{align}
with a continuous differentiable objective function $f$, tremendous progress has been made 
toward accelerating the gradient method 
$$
x_{n+1} = x_n - \a_n \nabla f(x_n)
$$
in optimization community. In particular, Nesterov's accelerated gradient method (\cite{N1983}) and 
Polyak's heavy ball method (\cite{P1964}) stood out and had far-reaching impact on the development 
of fast first order optimization methods. In Nesterov's accelerated gradient method, to define the 
next iterate, instead of using the current iterate, it uses a carefully chosen extrapolation point, 
built from the previous two iterates (\cite{AP2016,BT2009,CD2015,N2018}). Based on Nesterov's acceleration 
strategy, an accelerated version of Landweber-type method has been proposed in \cite{Jin2016} and a refined 
version of the method takes the form (see also \cite{Jin2022,ZWJ2018})
\begin{align}\label{NAG}
\begin{split}
\hat \xi_n^\d & = \xi_n^\d + \frac{n-1}{n+\gamma}(\xi_n^\d - \xi_{n-1}^\d), \qquad\quad\quad \, \,  
\hat x_n^\d  = \arg\min_{x\in X} \left\{ \R(x) - \l \hat \xi_n^\d, x\r\right\}, \\
\xi_{n+1}^\d & = \hat \xi_n^\d - \a_n^\d L(\hat x_n^\d)^* (F(\hat x_n^\d) - y^\d), \quad 
x_{n+1}^\d  = \arg\min_{x\in X} \left\{\R(x) - \l \xi_{n+1}^\d, x\r\right\},
\end{split}
\end{align}
where $\gamma \ge 2$. The numerical results presented in \cite{Jin2016} demonstrate its striking 
acceleration effect. Inspired by the numerical observations in \cite{Jin2016}, some efforts have 
been devoted to analyzing the regularization property of (\ref{NAG}) in the context of ill-posed 
problems. When X is a Hilbert space, $F$ is a bounded linear operator and $\R(x) = \|x\|^2/2$, 
the regularization property of the corresponding method, terminated by either {\it a priori} 
stopping rules or the discrepancy principle, has been established in \cite{K2021,N2017} based on 
a general acceleration framework in \cite{H1991} using orthogonal polynomials. When $F$ is nonlinear 
or $\R$ is non-quadratic, although some partial results are available in \cite{HR2018,Jin2022}, 
the analysis on (\ref{NAG}) is under developed and the understanding of its regularization property 
is largely open. By modifying the first equation in (\ref{NAG}) by $\hat \xi_n^\d  = \xi_n^\d 
+ \la_n^\d(\xi_n^\d - \xi_{n-1}^\d)$ with a connection coefficient $\la_n^\d$ to be determined, 
a two-point gradient method has been considered in \cite{HR2017,ZWJ2018}. This method, however, requires 
a discrete backtracking search algorithm to determine $\la_n^\d$ at each iteration step which can 
incur additional computational time. 

Different from Nesterov's method, Polyak's heavy ball method accelerates the gradient method by 
adding a momentum term to define the iterates, i.e. 
\begin{align}\label{HB0}
x_{n+1} = x_n - \a_n  \nabla f(x_n) + \beta_n (x_n - x_{n-1}). 
\end{align}
The performance of (\ref{HB0}) depends crucially on the property of $f$ and the choices of 
the step-size $\a_n$ and the momentum coefficient $\beta_n$, and its analysis turns out to 
be very challenging. When $f$ is twice continuous differentiable, strongly convex and has 
Lipschitz continuous gradient, it has been demonstrated in \cite{P1964}, under suitable 
constant choices of $\a_n$ and $\beta_n$, that the iterative sequence $\{x_n\}$ enjoys a provable 
linear convergence faster than the gradient descent. Due to the recent development of machine 
learning and the appearance of large scale problems, the heavy ball method has gained renewed 
interest and much attention has been paid on understanding its convergence behavior (\cite{GFJ2015,N2018,O2018,OCBP2014,ZK1993}). Although many strategies have been proposed 
to select $\a_n$ and $\beta_n$, how to choose these parameters to achieve acceleration effect 
for general convex and non-convex problems remains an active research topic in optimization 
community. 

In this paper we will consider the iterative method 
\begin{align}\label{AHB.2}
\begin{split}
\xi_{n+1}^\d & = \xi_n^\d - \a_n^\d L(x_n^\d)^* (F(x_n^\d) - y^\d) + \beta_n^\d (\xi_n^\d - \xi_{n-1}^\d), \\
x_{n+1}^\d & = \arg\min_{x\in X} \left\{\R(x) - \l \xi_{n+1}^\d, x\r\right\}
\end{split}
\end{align}
for solving ill-posed inverse problem (\ref{AHB.1}), where $\a_n^\d$ denotes the step-size and 
$\beta_n^\d$ denotes the momentum coefficient. This method differs from (\ref{Land}) in that a 
momentum term $\beta_n^\d (\xi_n^\d - \xi_{n-1}^\d)$ is added to define the iterates. The method 
(\ref{AHB.2}) has intimate connection with the heavy ball method. Indeed, when $\R(x) = \|x\|^2/2$
and $F$ is Fr\'{e}chet differentiable, the method (\ref{AHB.2}) with $L(x) = F'(x)$ becomes 
$$
x_{n+1}^\d = x_n^\d - \a_n^\d F'(x_n^\d)^* (F(x_n^\d) - y^\d) + \beta_n^\d (x_n^\d - x_{n-1}^\d)
$$
which can be obtained by applying the heavy ball method (\ref{HB0}) to the minimization problem 
(\ref{MP}) with 
\begin{align}\label{obj}
f(x) := \frac{1}{2} \|F(x) - y^\d\|^2. 
\end{align}
We will further demonstrate that, when $F$ is a bounded linear operator, the method (\ref{AHB.2}) 
with general strongly convex $\R$, can be derived by applying the heavy ball method to a dual problem. 
Based on the successful experience with the acceleration effect of the method (\ref{HB0}) for well-posed 
optimization problems, it is natural to expect that, with suitable choices of the parameters $\a_n^\d$ 
and $\beta_n^\d$, the method (\ref{AHB.2}) can have superior performance over the method (\ref{Land}) 
for ill-posed problems. In this paper we will provide a criterion for choosing $\a_n^\d$ and $\beta_n^\d$ 
and investigate the convergence property of the corresponding method. Our choices of $\a_n^\d$ and $\beta_n^\d$ 
are adaptive in the sense that only quantities arising from the computation are used. The formulae 
for calculating $\a_n^\d$ and $\beta_n^\d$ are explicit, thereby avoiding the need for a backtracking 
search strategy and consequently saving computational time. The complexity per iteration of our proposed 
method is comparable to one step of the Landweber-type method (\ref{Land}). However, extensive 
computational results demonstrate that our method significantly accelerates the Landweber-type method 
in terms of both the number of iterations and the computational time. 

This paper is organized as follows. In Section \ref{sect2} we first formulate the conditions on the 
regularization function $\R$ and the forward operator $F$, we then propose our adaptive heavy ball 
method by elucidating how to choose the step-sizes and the momentum coefficients, we also show that 
our method is well-defined and the iteration can be terminated in finite many steps by the discrepancy 
principle. In Section \ref{sect3} we focus on the convergence analysis and establish the regularization 
property of our method. In Section \ref{sect4} we perform various numerical simulations to demonstrate
the acceleration effect and superior performance of our method over the Landweber-type method. 

\section{\bf The adaptive heavy-ball method}\label{sect2}

In this section we will consider the iterative method (\ref{AHB.2}) and provide a motivation on 
choosing the step-size $\a_n^\d$ and the momentum coefficient $\beta_n^\d$ adaptively. We will then 
show that our proposed method, i.e. Algorithm \ref{alg:AHB} below, is well-defined. 

\subsection{\bf Assumptions}

Note that the method (\ref{AHB.2}) involves the regularization function $\R$ and the forward 
operator $F$. In order to carry out the analysis, certain conditions should be placed on $\R$ 
and $F$. For the regularization function $\R$, we assume the following standard condition. 

\begin{assumption}\label{ass0}
{\it $\R : X\to (-\infty, \infty]$ is a proper, lower semi-continuous, strongly convex function in 
the sense that there is a constant $\sigma>0$ such that
\begin{align*}
\R(t \bar x + (1-t) x) + \sigma t(1-t) \|\bar x -x\|^2 \le t \R(\bar x) + (1-t) \R(x)
\end{align*}
for all $\bar x, x\in \emph{dom}(\R)$ and $0\le t\le 1$.}
\end{assumption}

To make use of Assumption \ref{ass0}, let us recall a little convex analysis (\cite{Z2002}). For any 
proper convex function $\R: X \to (-\infty, \infty]$ we use $\p \R$ to denote its subdifferential, i.e. 
$$
\p \R(x) = \{\xi\in X: \R(z) \ge \p \R(x) + \l \xi, z - x\r \mbox{ for all } z \in X\}. 
$$
For any $x\in X$ with $\p \R(x) \ne \emptyset$, let $\xi \in \p \R(x)$ and define 
$$
D_\R^{\xi}(z,x) := \R(z) - \R(x) - \l \xi, z - x\r, \quad z \in X
$$
which is called the Bregman distance induced by $\R$ at $x$ in the direction $\xi$. If $\R$ is strongly 
convex in the sense of Assumption \ref{ass0}, it is easy to derive that 
\begin{align}\label{sc}
D_\R^{\xi}(z, x) \ge \sigma \| z - x\|^2
\end{align}
for all $x\in X$ with $\p \R(x) \ne \emptyset$, $\xi \in \p \R(x)$ and $z \in X$. For a proper, 
lower semi-continuous, convex function $\R: X \to (-\infty, \infty]$, its convex conjugate
$$
\R^*(\xi) := \sup_{x\in X} \left\{\l \xi, x\r - \R(x)\right\}, \quad \forall \xi \in X
$$
is also a proper, lower semi-continuous, convex function that maps from $X$ to $(-\infty, \infty]$. Moreover 
$$
\xi \in \p\R(x) \iff x \in \p\R^*(\xi) \iff \R(x) + \R^*(\xi) = \l \xi, x\r. 
$$
If $\R$ satisfies Assumption \ref{ass0}, then $\R^*$ is continuous differentiable, its gradient 
$\nabla \R^*$ is Lipschitz continuous with 
\begin{align}\label{RLip}
\|\nabla \R^*(\bar \xi) - \nabla \R^*(\xi)\| \le \frac{\|\bar \xi- \xi\|}{2 \sigma}, \quad 
\forall \bar \xi, \xi\in X; 
\end{align}
see \cite[Corollary 3.5.11]{Z2002}. Thus, under Assumption \ref{ass0}, for any $\xi\in X$ the minimization 
problem 
$$
x = \arg \min_{z\in X} \left\{\R(z) - \l \xi, z\r\right\}
$$
has a unique solution given by $x = \nabla \R^*(\xi)$. Clearly $\xi \in \p \R(x)$. 

For the forward operator $F$, we assume the following condition which has been widely used in 
dealing with ill-posed problems, where $B_{\rho}(x_0):=\{x\in X: \|x-x_0\|\leq \rho\}$ for 
any $\rho>0$.

\begin{assumption}\label{ass1}
{\it \begin{itemize}[leftmargin = 0.8cm]
\item[\emph{(a)}] $X$ and $Y$ are Hilbert spaces. 		
  
\item[\emph{(b)}] There exist $\rho>0$, $x_0 \in X$ and $\xi_0 \in \p \R(x_0)$ such that 
$B_{2\rho}(x_0)\subset \emph{dom}(F)$ and (\ref{AHB.1}) has a solution $\bar x$ such that 
$D_\R^{\xi_0} (\bar x, x_0) \le \sigma \rho^2$.
  
\item[\emph{(c)}] $F$ is weakly closed on $\emph{dom}(F)$, i.e. for any sequence 
$\{x_n\} \subset \emph{dom}(F)$ satisfying $x_n \rightharpoonup x \in X$ and $F(x_n)\to v \in Y$ 
there hold $x \in \emph{dom}(F)$ and $F(x) = v$. Throughout the paper ``$\to$' and 
``$\rightharpoonup$" are used to denote the strong and weak convergence respectively. 
  
\item[\emph{(d)}] There exist a family of bounded linear operators $\{L(x): X \to Y\}_{x\in B_{2\rho}(x_0)}$ 
such that $x \to L(x)$ is continuous on $B_{2\rho}(x_0)$ and there is $0\le \eta <1$ such that
$$
\|F(x) - F(\bar x) - L(\bar x)(x - \bar x)\|\le \eta \|F(x) - F(\bar x)\|
$$
for all $x,\bar x\in B_{2\rho}(x_0)$. Moreover, there is a constant $L>0$ such that $\|L(x)\| \le L$ for 
all $x \in B_{2\rho}(x_0)$. 
\end{itemize}
}
\end{assumption}

We remark that, when $F$ is a bounded linear operator, Assumption \ref{ass1} holds automatically with 
$\rho = \infty$ and $\eta = 0$. When $F$ is nonlinear, Assumption \ref{ass1} (d) is known as the 
tangential cone condition which gives certain restriction on the nonlinearity of $F$. Assumption 
\ref{ass1} (d) clearly implies 
$$
\|F(x) - F(\bar x) \| \le \frac{1}{1-\eta} \|L(\bar x) (x - \bar x)\|, \quad 
\forall x, \bar x \in B_{2\rho}(x_0)
$$
and thus the continuity of the mapping $x \to F(x)$ on $B_{2\rho}(x_0)$. Based on Assumption \ref{ass0}
and Assumption \ref{ass1}, it has been shown in \cite[Lemma 3.2]{JW2013} that (\ref{AHB.1}) has a 
unique solution $x^\dag$ satisfying 
\begin{align}\label{Rmin}
D_\R^{\xi_0}(x^\dag, x_0) := \min_{x\in \mbox{dom}(F)} \left\{D_\R^{\xi_0}(x, x_0): F(x) = y\right\}.
\end{align}
By using (\ref{sc}) and Assumption \ref{ass1} (b) it is easy to see that $\|x^\dag - x_0\| \le \rho$, 
i.e. $x^\dag \in B_\rho(x_0)$.

\subsection{\bf The method}

Let us now turn to the consideration on the method (\ref{AHB.2}). We first demonstrate that, 
when $F$ is a bounded linear operator, the method (\ref{AHB.2}) can be derived by applying 
the heavy ball method to the dual problem of 
\begin{align}
\min\{\R(x): x \in X \mbox{ and } F x = y\}.
\end{align}
To see this, we recall that the associated dual function is given by 
\begin{align*}
d(\la) := \inf_{x\in X} \left\{\R(x) - \l \la, F x - y \r\right\} 
= - \R^*(F^* \la) + \l \la, y\r
\end{align*}
and hence the corresponding dual problem is $\max_{\la \in Y} d(\la)$, i.e. 
\begin{align}\label{dual}
\min_{\la \in Y} \left\{\R^*(F^* \la) - \l \la, y\r\right\}.
\end{align}
Since $\R$ satisfies Assumption \ref{ass0}, the objective function in (\ref{dual}) is continuous 
differentiable. Applying the heavy ball method to solve (\ref{dual}) gives 
\begin{align*}
\la_{n+1} = \la_n - \a_n (F \nabla \R^*(F^* \la_n) - y) + \beta_n (\la_n - \la_{n-1}).
\end{align*}
By setting $\xi_n := F^* \la_n$ and $x_n := \nabla \R^*(F^* \la_n)$, we then obtain 
\begin{align*}
x_n = \nabla \R^*(\xi_n), \quad 
\xi_{n+1} = \xi_n - \a_n F^* (F x_n - y) + \beta_n (\xi_n - \xi_{n-1}).
\end{align*}
In case only noisy data $y^\d$ is available, by replacing $y$ by $y^\d$ and allowing $\a_n$, $\beta_n$ to 
be dependent on $y^\d$ in the above equation we may obtain the method 
\begin{align*}
x_n^\d = \nabla \R^*(\xi_n^\d), \quad 
\xi_{n+1}^\d = \xi_n^\d - \a_n^\d F^* (F x_n^\d - y^\d) + \beta_n^\d (\xi_n^\d - \xi_{n-1}^\d)
\end{align*}
which is equivalent to (\ref{AHB.2}) when $F$ is bounded linear. 

Note that the term $F^*(Fx_n^\d - y^\d)$ in the above equation is the gradient of the function 
$x\to \frac{1}{2}\|F x- y^\d\|^2$ at $x_n^\d$. When $F$ is a nonlinear Fr\'{e}chet differentiable operator, 
we may use the gradient $F'(x_n^\d)^*(F(x_n^\d) - y^\d)$ of the function (\ref{obj}) at $x_n^\d$ to 
replace $F^*(F x_n^\d - y^\d)$ in the above equation; in case $F$ is not differentiable at $x_n^\d$, 
we may use $L(x_n^\d)$ to substitute $F'(x_n^\d)$. Consequently, it leads to the method (\ref{AHB.2}) for solving (\ref{AHB.1}). 

In order for the method (\ref{AHB.2}) to have fast convergence, it is natural to choose $\a_n^\d$ and 
$\beta_n^\d$ at each iteration step such that $x_{n+1}^\d$ is as close to a solution of (\ref{AHB.1}) 
as possible. Let $\hat x$ denote any solution of (\ref{AHB.1}) in $B_{2\rho}(x_0)$. We may measure the 
closeness from $x_{n+1}^\d$ to $\hat x$ by the Bregman distance 
$$
D_\R^{\xi_{n+1}^\d}(\hat x, x_{n+1}^\d) := \R(\hat x) - \R(x_{n+1}^\d) 
- \l \xi_{n+1}^\d, \hat x - x_{n+1}^\d\r
$$
induced by $\R$ at $x_{n+1}^\d$ in the direction $\xi_{n+1}^\d$. Directly minimizing this quantity 
becomes infeasible because it involves an unknown solution $\hat x$ and a strongly convex function $\R$ 
which may be non-quadratic. As a compromise, we may first derive a suitable upper bound for this 
quantity and then take minimization steps. 

We will elucidate the idea on choosing $\a_n^\d$ and $\beta_n^\d$ below. For simplicity of 
exposition, we introduce the notation
$$
r_n^\d: = F(x_n^\d) - y^\d, \quad g_n^\d := L(x_n^\d)^* r_n^\d \quad \mbox{ and } \quad 
m_n^\d := \xi_n^\d - \xi_{n-1}^\d
$$
for all integers $n\ge 0$. Then  (\ref{AHB.2}) can be written as 
\begin{align}\label{AHB.2.0}
\xi_{n+1}^\d = \xi_n^\d - \a_n^\d g_n^\d + \beta_n^\d m_n^\d, \quad
x_{n+1}^\d = \nabla \R^*(\xi_{n+1}^\d). 
\end{align}
Let $\Delta_n^\d:= D_\R^{\xi_n^\d}(\hat x, x_n^\d)$ for $n\ge 0$. Since $\xi_n^\d \in \p \R(x_n^\d)$, 
we have 
$
\R(x_n^\d) + \R^*(\xi_n^\d) = \l\xi_n^\d, x_n^\d\r
$
and thus 
$$
\Delta_n^\d = \R(\hat x) + \R^*(\xi_n^\d) - \l \xi_n^\d, \hat x\r. 
$$
Therefore, by using $x_n^\d = \nabla \R^*(\xi_n^\d)$, (\ref{RLip}) and (\ref{AHB.2.0}), we can obtain
\begin{align*}
\Delta_{n+1}^\d - \Delta_n^\d 
& = \left( \R(\hat x) + \R^*(\xi_{n+1}^\d) - \l \xi_{n+1}^\d, \hat x\r\right) 
- \left( \R(\hat x) + \R^*(\xi_n^\d) - \l \xi_n^\d, \hat x\r\right) \\
& = \R^*(\xi_{n+1}^\d) - \R^*(\xi_n^\d) - \l \xi_{n+1}^\d - \xi_n^\d, \nabla\R^*(\xi_n^\d)\r 
+ \l \xi_{n+1}^\d - \xi_n^\d, x_n^\d - \hat x\r \\
& \le \frac{1}{4\sigma} \|\xi_{n+1}^\d - \xi_n^\d\|^2 + \l \xi_{n+1}^\d - \xi_n^\d, x_n^\d - \hat x\r \\
& =  \frac{1}{4\sigma} \|\a_n^\d g_n^\d - \beta_n^\d m_n^\d\|^2 
+ \l - \a_n^\d g_n^\d + \beta_n^\d m_n^\d, x_n^\d - \hat x\r.
\end{align*}
By the polarization identity in Hilbert spaces, we then have 
\begin{align*}
\Delta_{n+1}^\d - \Delta_n^\d 
& \le \frac{1}{4\sigma} \left( (\a_n^\d)^2 \|g_n^\d\|^2 + (\beta_n^\d)^2 \|m_n^\d\|^2 - 2 \a_n^\d \beta_n^\d \l g_n^\d, m_n^\d\r\right) \\
& \quad \, - \a_n^\d \l r_n^\d, L(x_n^\d)(x_n^\d-\hat x)\r + \beta_n^\d \l m_n^\d, x_n^\d - \hat x\r \\
& = \frac{1}{4\sigma} (\a_n^\d)^2 \|g_n^\d\|^2 + \frac{1}{4\sigma} (\beta_n^\d)^2 \|m_n^\d\|^2 
- \frac{1}{2\sigma} \a_n^\d \beta_n^\d \l g_n^\d, m_n^\d\r \\
& \quad \, - \a_n^\d \l r_n^\d, r_n^\d + y^\d - y + y - F(x_n^\d) - L(x_n^\d)(\hat x - x_n^\d)\r \\
& \quad \, + \beta_n^\d \l m_n^\d, x_n^\d - \hat x\r .
\end{align*}
Assume $x_n^\d \in B_{2\rho}(x_0)$. By using $\|y^\d - y\| \le \d$, the Cauchy-Schwarz inequality 
and Assumption \ref{ass1} (d), we further obtain 
\begin{align}\label{AHB.83}
\Delta_{n+1}^\d - \Delta_n^\d 
& \le \frac{1}{4\sigma} (\a_n^\d)^2 \|g_n^\d\|^2 + \frac{1}{4\sigma} (\beta_n^\d)^2 \|m_n^\d\|^2 
- \frac{1}{2\sigma} \a_n^\d \beta_n^\d \l g_n^\d, m_n^\d\r - \a_n^\d \|r_n^\d\|^2 \nonumber \\
& \quad \, + \d \a_n^\d \|r_n^\d\| + \eta \a_n^\d \|r_n^\d\| \|y - F(x_n^\d)\| 
+ \beta_n^\d \l m_n^\d, x_n^\d - \hat x\r \nonumber \\
& \le  \frac{1}{4\sigma} (\a_n^\d)^2 \|g_n^\d\|^2 - \a_n^\d \|r_n^\d\|^2 + (1+\eta) \d \a_n^\d \|r_n^\d\| 
+ \eta \a_n^\d \|r_n^\d\|^2 \nonumber \\
& \quad\, + \beta_n^\d \l m_n^\d, x_n^\d - \hat x\r + \frac{1}{4\sigma} (\beta_n^\d)^2 \|m_n^\d\|^2 
- \frac{1}{2\sigma} \a_n^\d \beta_n^\d \l g_n^\d, m_n^\d\r.
\end{align}
Note that when $\beta_n^\d = 0$, the method (\ref{AHB.2}) becomes the Landweber-type method (\ref{Land}) and 
extensive analysis has been done on (\ref{Land}) with suitable choices of the step-size $\a_n^\d$, 
including (\ref{css}) and (\ref{ass}). For the method (\ref{AHB.2}) we also choose $\a_n^\d$ as 
such, i.e.  
\begin{align}\label{alphan}
\a_n^\d := \frac{\mu_0}{L^2} \quad \mbox{ or } \quad 
\a_n^\d := \min\left\{\frac{\mu_0\|r_n^\d\|^2}{\|g_n^\d\|^2}, \mu_1\right\}
\end{align}
when $\|r_n^\d\| >\tau \d$, where $\mu_0$ and $\mu_1$ are two positive constants. Then, by plugging 
the choice of $\a_n^\d$ from (\ref{alphan}) into (\ref{AHB.83}) and using $\d \le \|r_n^\d\|/\tau$ 
we have 
\begin{align}\label{AHB.6}
\Delta_{n+1}^\d - \Delta_n^\d 
& \le - \left(1 - \frac{1+\eta}{\tau} - \eta - \frac{\mu_0}{4\sigma} \right) \a_n^\d \|r_n^\d\|^2 
+ \frac{1}{4\sigma} (\beta_n^\d)^2 \|m_n^\d\|^2 \nonumber \\
& \quad \, + \beta_n^\d \l m_n^\d, x_n^\d - \hat x\r  
- \frac{1}{2\sigma} \a_n^\d \beta_n^\d \l g_n^\d, m_n^\d\r. 
\end{align}
We next consider the choice of $\beta_n^\d$. It seems natural to obtain it by minimizing the right 
hand side of the above equation with respect to $\beta_n^\d$. Let us fix a number $\beta\in (0,\infty]$ 
and minimize the right hand side of (\ref{AHB.6}) over $[0, \beta]$ to obtain 
$$
\beta_n^\d = \min\left\{\max\left\{0, 
\frac{\a_n^\d \l g_n^\d, m_n^\d\r - 2\sigma \l m_n^\d, x_n^\d - \hat x\r}{\|m_n^\d\|^2}\right\}, 
\beta\right\}
$$
whenever $m_n^\d\ne 0$. However, this formula for $\beta_n^\d$ is not computable because 
it involves $\hat x$ which is unknown. We need to treat the term 
\begin{align}\label{AHB.3}
\gamma_n^\d := \l m_n^\d, x_n^\d - \hat x \r 
\end{align}
by using a suitable computable surrogate. Note that $\gamma_0^\d = 0$ and, for $n\ge 1$, 
\begin{align*}
\gamma_n^\d & = \l m_n^\d, x_n^\d - x_{n-1}^\d \r + \l m_n^\d, x_{n-1}^\d - \hat x\r \\
& = \l m_n^\d, x_n^\d - x_{n-1}^\d\r + \l -\a_{n-1}^\d g_{n-1}^\d 
+ \beta_{n-1}^\d m_{n-1}^\d, x_{n-1}^\d - \hat x\r \\
& = \l m_n^\d, x_n^\d - x_{n-1}^\d \r - \a_{n-1}^\d \l r_{n-1}^\d, L(x_{n-1}^\d)(x_{n-1}^\d-\hat x)\r 
+ \beta_{n-1}^\d \gamma_{n-1}^\d \\
& = \l m_n^\d, x_n^\d - x_{n-1}^\d \r - \a_{n-1}^\d \| r_{n-1}^\d\|^2 \\
& \quad \, - \a_{n-1}^\d \l r_{n-1}^\d, y^\d - F(x_{n-1}^\d) +  L(x_{n-1}^\d)(x_{n-1}^\d-\hat x)\r 
+ \beta_{n-1}^\d \gamma_{n-1}^\d \\
& \le \l m_n^\d, x_n^\d - x_{n-1}^\d \r - \a_{n-1}^\d \| r_{n-1}^\d\|^2 \\
& \quad \, + \a_{n-1}^\d \| r_{n-1}^\d\| 
\left(\d + \| y - F(x_{n-1}^\d) - L(x_{n-1}^\d)(\hat x - x_{n-1}^\d)\|\right) 
+ \beta_{n-1}^\d \gamma_{n-1}^\d.
\end{align*} 
Assuming $x_{n-1}^\d \in B_{2\rho}(x_0)$, we may use Assumption \ref{ass1} (d) to further obtain 
\begin{align}\label{AHB.4}
\gamma_n^\d 
& \le \l m_n^\d, x_n^\d - x_{n-1}^\d \r - \a_{n-1}^\d \| r_{n-1}^\d\|^2 
+ \a_{n-1}^\d \| r_{n-1}^\d\| \left(\d + \eta \| y - F(x_{n-1}^\d) \|\right) \nonumber \\
& \quad \, + \beta_{n-1}^\d \gamma_{n-1}^\d \nonumber \\
& \le \l m_n^\d, x_n^\d - x_{n-1}^\d \r - (1-\eta)\a_{n-1}^\d \|r_{n-1}^\d\|^2 \nonumber \\
& \quad \, + (1+\eta)\a_{n-1}^\d \d \|r_{n-1}^\d\| + \beta_{n-1}^\d \gamma_{n-1}^\d.
\end{align}
This motivates us to introduce $\{\tilde \gamma_n^\d\}$ such that 
\begin{align}\label{etan}
\tilde \gamma_n^\d = \left\{\begin{array}{lll} 
0 & \mbox{ if } n =0, \\
\l m_n^\d, x_n^\d - x_{n-1}^\d\r - (1-\eta) \a_{n-1}^\d \|r_{n-1}^\d\|^2 & \\
\qquad \qquad + (1+\eta) \a_{n-1}^\d \d \|r_{n-1}^\d\| + \beta_{n-1}^\d \tilde \gamma_{n-1}^\d & \mbox{ if } n \ge 1
\end{array}\right.
\end{align}
once $x_k^\d$ for $0\le k \le n$ and $\a_k^\d, \beta_k^\d$ for $0\le k \le n-1$ are defined.  
Therefore, it leads us to propose Algorithm \ref{alg:AHB} below that will be considered in this paper. 

\begin{algorithm}[AHB: Adaptive heavy ball method]\label{alg:AHB}
Take an initial guess $\xi_0$, calculate $x_0:= \arg\min_{x\in X} \{\R(x) - \l \xi_0, x\r\}$, and set $\xi_{-1}^\d = \xi_0^\d = \xi_0$. Pick the numbers $\tau>1$, $0<\beta\le \infty$, $\mu_0>0$ and $\mu_1>0$. For $n\ge 0$ do the following: 
		
\begin{enumerate}[leftmargin = 0.8cm]
\item[\emph{(i)}] Calculate $r_n^\d := F(x_n^\d) - y^\d $. If $\|r_n^\d\|\le \tau \d$, stop and output $x_n^\d$; 
			
\item[\emph{(ii)}] Calculate $g_n^\d := L(x_n^\d)^*(F(x_n^\d) - y^\d)$ and determine $\a_n^\d$ according to (\ref{alphan}); 
			
\item[\emph{(iii)}] Calculate $m_n^\d := \xi_n^\d - \xi_{n-1}^\d$ and determine $\tilde \gamma_n^\d$ by the formula (\ref{etan}); 
			
\item[\emph{(iv)}] Calculate $\beta_n^\d$ by 
\begin{align*}
\beta_n^\d = \left\{\begin{array}{lll}
\min\left\{\max\left\{0, \frac{\a_n^\d \l g_n^\d, m_n^\d\r - 2\sigma \tilde \gamma_n^\d}{\|m_n^\d\|^2}\right\}, \beta\right\} 
& \emph{ if } m_n^\d \ne 0,\\
0 & \emph{ if } m_n^\d =0; 
\end{array}\right.
\end{align*}
			
\item[\emph{(v)}] Update $\xi_{n+1}^\d$ and $x_{n+1}^\d$ by 
$$
\xi_{n+1}^\d = \xi_n^\d - \a_n^\d g_n^\d + \beta_n^\d m_n^\d, \quad 
x_{n+1}^\d  = \arg\min_{x\in X} \{\R(x) - \l \xi_{n+1}^\d, x\r \}.
$$
\end{enumerate}
\end{algorithm}

Note that, for the implementation of one step of Algorithm \ref{alg:AHB}, the most expensive parts are the 
calculations of $r_n^\d$, $g_n^\d$ and $x_{n+1}^\d$ which are common for Landweber-type method (\ref{Land});
the computational load for other parts relating to $\a_n^\d$, $\tilde \gamma_n^\d$ and $\beta_n^\d$ can be 
negligible. Therefore, the computational complexity per iteration of Algorithm \ref{alg:AHB} is marginally
higher than, but very close to, that of one step of the Landweber-type method (\ref{Land}). 

In the design of Algorithm \ref{alg:AHB}, the discrepancy principle has been incorporated, i.e. the iteration 
stops as long as $\|r_n^\d\|\le \tau \d$ is satisfied for the first time. In the following we will prove that 
Algorithm \ref{alg:AHB} is well-defined by showing that the iteration indeed can stop in finite many steps. 

\begin{lemma}\label{AHB.lem1}
Let Assumption \ref{ass0} and Assumption \ref{ass1} hold. Consider Algorithm \ref{alg:AHB} and let $k\ge 0$ 
be an integer such that $x_n^\d \in B_{2\rho}(x_0)$ for all $0\le n < k$. Define $\gamma_n^\d$ by 
(\ref{AHB.3}) for $0\le n\le k$. Then $\gamma_n^\d \le \tilde \gamma_n^\d$ for all $0\le n \le k$.  
\end{lemma}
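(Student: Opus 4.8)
The plan is to prove $\gamma_n^\d \le \tilde\gamma_n^\d$ by induction on $n$, exploiting the fact that the recursion defining $\tilde\gamma_n^\d$ in \eqref{etan} was precisely extracted as an upper bound for the recursion satisfied by $\gamma_n^\d$ derived in \eqref{AHB.4}. First I would dispose of the base case $n=0$: by \eqref{AHB.3} we have $\gamma_0^\d = \l m_0^\d, x_0^\d - \hat x\r$, and since $\xi_{-1}^\d = \xi_0^\d$ in Algorithm \ref{alg:AHB}, we get $m_0^\d = \xi_0^\d - \xi_{-1}^\d = 0$, hence $\gamma_0^\d = 0 = \tilde\gamma_0^\d$.

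For the inductive step, suppose $1 \le n \le k$ and $\gamma_{n-1}^\d \le \tilde\gamma_{n-1}^\d$. I would reproduce the computation already carried out in the excerpt just before \eqref{AHB.4}: writing $x_n^\d - \hat x = (x_n^\d - x_{n-1}^\d) + (x_{n-1}^\d - \hat x)$, using the update rule $\xi_n^\d - \xi_{n-1}^\d = -\a_{n-1}^\d g_{n-1}^\d + \beta_{n-1}^\d m_{n-1}^\d$ together with $g_{n-1}^\d = L(x_{n-1}^\d)^* r_{n-1}^\d$ and the definition $\gamma_{n-1}^\d = \l m_{n-1}^\d, x_{n-1}^\d - \hat x\r$, one obtains the identity
\[
\gamma_n^\d = \l m_n^\d, x_n^\d - x_{n-1}^\d\r - \a_{n-1}^\d \l r_{n-1}^\d, L(x_{n-1}^\d)(x_{n-1}^\d - \hat x)\r + \beta_{n-1}^\d \gamma_{n-1}^\d.
\]
The hypothesis that $x_n^\d \in B_{2\rho}(x_0)$ for all $0 \le n < k$ guarantees $x_{n-1}^\d \in B_{2\rho}(x_0)$, so Assumption \ref{ass1}(d) applies; combined with $\|y^\d - y\|\le \d$ and Cauchy--Schwarz, this yields exactly the bound \eqref{AHB.4}, namely
\[
\gamma_n^\d \le \l m_n^\d, x_n^\d - x_{n-1}^\d\r - (1-\eta)\a_{n-1}^\d \|r_{n-1}^\d\|^2 + (1+\eta)\a_{n-1}^\d \d \|r_{n-1}^\d\| + \beta_{n-1}^\d \gamma_{n-1}^\d.
\]

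The final step is to compare this with the defining formula \eqref{etan} for $\tilde\gamma_n^\d$: the first three terms on the right-hand side coincide verbatim with those in \eqref{etan}, so the two differ only in the last term, $\beta_{n-1}^\d \gamma_{n-1}^\d$ versus $\beta_{n-1}^\d \tilde\gamma_{n-1}^\d$. Since $\beta_{n-1}^\d \ge 0$ by construction in step (iv) of Algorithm \ref{alg:AHB} and $\gamma_{n-1}^\d \le \tilde\gamma_{n-1}^\d$ by the inductive hypothesis, we get $\beta_{n-1}^\d \gamma_{n-1}^\d \le \beta_{n-1}^\d \tilde\gamma_{n-1}^\d$, whence $\gamma_n^\d \le \tilde\gamma_n^\d$, closing the induction.

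There is essentially no serious obstacle here: the lemma is a bookkeeping statement and the entire analytic content — the derivation of the recursive inequality for $\gamma_n^\d$ — has already been performed in the text preceding the lemma. The only points requiring a moment's care are verifying the base case via $\xi_{-1}^\d = \xi_0^\d$ (so that $m_0^\d = 0$), and checking that the domain condition on $x_{n-1}^\d$ needed to invoke Assumption \ref{ass1}(d) is supplied by the hypothesis $x_n^\d \in B_{2\rho}(x_0)$ for $0 \le n < k$; one should also note the nonnegativity of $\beta_{n-1}^\d$, which is immediate from the $\max\{0,\cdot\}$ in its definition. Thus the proof is a clean induction whose inductive step is just a restatement of \eqref{AHB.4} together with the monotonicity of multiplication by the nonnegative scalar $\beta_{n-1}^\d$.
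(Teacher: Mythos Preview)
Your proof is correct and follows essentially the same approach as the paper: an induction on $n$ with trivial base case $\gamma_0^\d = \tilde\gamma_0^\d = 0$, and inductive step obtained by combining inequality \eqref{AHB.4} with the nonnegativity of $\beta_{n-1}^\d$ and the inductive hypothesis. Your write-up is in fact slightly more explicit than the paper's, spelling out why $m_0^\d = 0$ and why the hypothesis on $x_{n-1}^\d$ ensures Assumption~\ref{ass1}(d) applies.
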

	
\begin{proof}
Since $\gamma_0^\d = \tilde \gamma_0^\d =0$, the result is true for $n =0$. Assume next that 
$\gamma_n^\d \le \tilde \gamma_n^\d$ for some $0\le n < k$. By noting that $\beta_n^\d \ge 0$, 
we may use (\ref{AHB.4}) and the definition of $\tilde \gamma_{n+1}^\d$ to obtain 
\begin{align*}
\gamma_{n+1}^\d 
& \le \l m_{n+1}^\d, x_{n+1}^\d - x_n^\d\r - (1-\eta) \a_{n}^\d \|r_{n}^\d\|^2
  + (1+\eta) \a_{n}^\d \d \|r_{n}^\d\| + \beta_n^\d \gamma_n^\d \\
& \le \l m_{n+1}^\d, x_{n+1}^\d - x_n^\d\r - (1-\eta) \a_n^\d \|r_n^\d\|^2
+ (1+\eta) \a_n^\d \d \|r_n^\d\| + \beta_n^\d \tilde \gamma_n^\d \\
& = \tilde \gamma_{n+1}^\d. 
\end{align*}
By the induction principle, this shows the result. 
\end{proof}
	
\begin{lemma}\label{AHB.lem2}
Let Assumption \ref{ass0} and Assumption \ref{ass1} hold. Consider Algorithm \ref{alg:AHB} with $\mu_0>0$ 
and $\tau>1$ being chosen such that 
\begin{align}\label{mutau}
c_0: = 1-\frac{1+\eta}{\tau}-\eta-\frac{\mu_0}{4\sigma} > 0.
\end{align}
Let $k\ge 0$ be an integer such that $\|r_n^\d\| > \tau \d$ for all $0\le n < k$. Then 
$x_n^\d \in B_{2\rho}(x_0)$ for all $0\le n\le k$ and 
\begin{align*}
\Delta_{n+1}^\d \le \Delta_n^\d - c_0 \a_n^\d \|r_n^\d\|^2
\end{align*}
for all $0\le n < k$, where $\Delta_n^\d := D_\R^{\xi_n^\d}(\hat x, x_n^\d)$ and $\hat x$ denotes any 
solution of (\ref{AHB.1}) in $B_{2\rho}(x_0)]\cap \emph{dom}(\R)$.
\end{lemma}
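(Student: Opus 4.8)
The plan is to argue by induction on $n$, simultaneously establishing the two claims: that $x_n^\d \in B_{2\rho}(x_0)$ and that the monotonicity inequality $\Delta_{n+1}^\d \le \Delta_n^\d - c_0 \a_n^\d \|r_n^\d\|^2$ holds. The base case $n = 0$ is immediate since $\xi_0^\d = \xi_{-1}^\d$ gives $m_0^\d = 0$, hence $\beta_0^\d = 0$, and we need to check $x_0^\d \in B_{2\rho}(x_0)$, which follows from Assumption \ref{ass1} (b): $D_\R^{\xi_0}(\bar x, x_0) \le \sigma \rho^2$ is actually not what we need directly; rather $x_0^\d = x_0$ by construction (since $\xi_0^\d = \xi_0$), so trivially $x_0^\d \in B_{2\rho}(x_0)$.

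For the inductive step, suppose $x_j^\d \in B_{2\rho}(x_0)$ for all $0 \le j \le n$ (with $n < k$) and that the descent inequality holds up to index $n-1$. Then $x_n^\d \in B_{2\rho}(x_0)$ lets me invoke the estimate (\ref{AHB.6}), which was derived precisely under the assumption $x_n^\d \in B_{2\rho}(x_0)$ together with the choice of $\a_n^\d$ from (\ref{alphan}) and $\|r_n^\d\| > \tau \d$. The right-hand side of (\ref{AHB.6}) still contains the term $\beta_n^\d \l m_n^\d, x_n^\d - \hat x\r = \beta_n^\d \gamma_n^\d$. The key is to replace $\gamma_n^\d$ by $\tilde\gamma_n^\d$: since $\beta_n^\d \ge 0$ and, by Lemma \ref{AHB.lem1}, $\gamma_n^\d \le \tilde\gamma_n^\d$, we get $\beta_n^\d \gamma_n^\d \le \beta_n^\d \tilde\gamma_n^\d$. (Lemma \ref{AHB.lem1} applies because $x_j^\d \in B_{2\rho}(x_0)$ for $0 \le j \le n$, which covers $0 \le j < n+1$; here I take the integer in Lemma \ref{AHB.lem1} to be $n$ or $n+1$ with the hypothesis satisfied for indices $< $ that integer.) Hence
\begin{align*}
\Delta_{n+1}^\d - \Delta_n^\d \le - c_0 \a_n^\d \|r_n^\d\|^2 + \frac{1}{4\sigma}(\beta_n^\d)^2 \|m_n^\d\|^2 + \beta_n^\d \tilde\gamma_n^\d - \frac{1}{2\sigma}\a_n^\d \beta_n^\d \l g_n^\d, m_n^\d\r.
\end{align*}
Now the definition of $\beta_n^\d$ in step (iv) of Algorithm \ref{alg:AHB} comes into play: $\beta_n^\d$ is exactly the minimizer over $[0,\beta]$ of the quadratic $t \mapsto \frac{1}{4\sigma} t^2 \|m_n^\d\|^2 + t\big(\tilde\gamma_n^\d - \frac{1}{2\sigma}\a_n^\d \l g_n^\d, m_n^\d\r\big)$ (when $m_n^\d \ne 0$; when $m_n^\d = 0$ the extra terms all vanish and $\beta_n^\d = 0$). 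Since $t = 0$ lies in $[0,\beta]$, the minimum value is $\le$ the value at $t = 0$, which is $0$. Therefore the sum of the last three terms is $\le 0$, giving $\Delta_{n+1}^\d - \Delta_n^\d \le - c_0 \a_n^\d \|r_n^\d\|^2$, which is the desired descent inequality at index $n$.

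It remains to propagate $x_{n+1}^\d \in B_{2\rho}(x_0)$. From the descent inequality just established, together with the inductive chain, $\Delta_{n+1}^\d \le \Delta_0^\d$. Now $\Delta_0^\d = D_\R^{\xi_0^\d}(\hat x, x_0^\d) = D_\R^{\xi_0}(\hat x, x_0) \le \sigma \rho^2$ — here I need $\hat x$ to satisfy $D_\R^{\xi_0}(\hat x, x_0) \le \sigma\rho^2$; this holds for the specific solution $\bar x$ from Assumption \ref{ass1} (b), and more generally the statement restricts $\hat x$ to solutions in $B_{2\rho}(x_0) \cap \mathrm{dom}(\R)$, so I should either fix $\hat x = \bar x$ in the descent argument or note that $x^\dag$ from (\ref{Rmin}) also satisfies this bound; then by strong convexity (\ref{sc}), $\sigma\|x^\dag - x_{n+1}^\d\|^2 \le \Delta_{n+1}^\d \le \sigma\rho^2$, so $\|x^\dag - x_{n+1}^\d\| \le \rho$, and since $\|x^\dag - x_0\| \le \rho$ we conclude $\|x_{n+1}^\d - x_0\| \le 2\rho$, i.e. $x_{n+1}^\d \in B_{2\rho}(x_0)$. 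This closes the induction.

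The main obstacle — really the only subtle point — is making sure the logical scaffolding of the simultaneous induction is airtight: the estimate (\ref{AHB.6}) requires $x_n^\d \in B_{2\rho}(x_0)$, Lemma \ref{AHB.lem1} requires $x_j^\d \in B_{2\rho}(x_0)$ for all $j$ up to the current index, and the conclusion $x_{n+1}^\d \in B_{2\rho}(x_0)$ requires the descent inequality at index $n$, which in turn requires the previous two. One must order the induction so that at step $n$ one first knows $x_j^\d \in B_{2\rho}(x_0)$ for $j \le n$, then derives descent at $n$, then deduces $x_{n+1}^\d \in B_{2\rho}(x_0)$. A secondary point to handle carefully is the choice of the reference solution $\hat x$: to use the bound $\Delta_0^\d \le \sigma\rho^2$ one should take $\hat x$ to be a solution with $D_\R^{\xi_0}(\hat x, x_0) \le \sigma\rho^2$ (such as $x^\dag$ or $\bar x$), so the argument establishing $x_{n+1}^\d \in B_{2\rho}(x_0)$ should be run with that particular choice, even though the stated descent inequality then holds for any solution $\hat x \in B_{2\rho}(x_0) \cap \mathrm{dom}(\R)$ by repeating the computation.
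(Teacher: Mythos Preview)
Your proposal is correct and follows essentially the same approach as the paper's proof: an induction that first uses (\ref{AHB.6}), Lemma~\ref{AHB.lem1}, and the minimizing property of $\beta_n^\d$ to obtain $\Delta_{n+1}^\d - \Delta_n^\d \le -c_0\a_n^\d\|r_n^\d\|^2$, and then specializes to $\hat x = x^\dag$ to deduce $x_{n+1}^\d \in B_{2\rho}(x_0)$ via (\ref{sc}) and Assumption~\ref{ass1}(b). Your discussion of the ordering of the inductive steps and of the need to run the containment argument with the particular solution $x^\dag$ mirrors exactly what the paper does.
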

	
\begin{proof}
We first show by induction that 
\begin{align}\label{AHB.80}
x_n^\d \in B_{2\rho}(x_0) \quad \mbox{and} \quad 
D_\R^{\xi_n^\d} (x^\dag, x_n^\d) \le D_\R^{\xi_0}(x^\dag, x_0)
\end{align}
for all integers $0\le n \le k$. It is trivial for $n = 0$ as $\xi_0^\d = \xi_0$ and $x_0^\d = x_0$.
Next we assume that (\ref{AHB.80}) holds for all $0\le n \le l$ for some $l< k$. According 
to (\ref{AHB.6}) and Lemma \ref{AHB.lem1} we have for any solution $\hat x$ of (\ref{AHB.1}) 
in $B_{2\rho}(x_0) \cap \mbox{dom}(\R)$ that
\begin{align*}
\Delta_{l+1}^\d - \Delta_l^\d  
& \le - c_0 \a_l^\d \|r_l^\d\|^2 + \frac{1}{4\sigma} (\beta_l^\d)^2 \|m_l^\d\|^2 
+ \beta_l^\d \gamma_l^\d  - \frac{1}{2\sigma} \a_l^\d \beta_l^\d \l g_l^\d, m_l^\d\r \\
& \le - c_0 \a_l^\d \|r_l^\d\|^2 + \frac{1}{4\sigma} (\beta_l^\d)^2 \|m_l^\d\|^2 
+ \beta_l^\d \tilde \gamma_l^\d  - \frac{1}{2\sigma} \a_l^\d \beta_l^\d \l g_l^\d, m_l^\d\r.
\end{align*}
Note that $\beta_l^\d$ is the minimizer of the function $t \to h_l(t)$ over $[0, \beta]$, where 
$$
h_l(t) := - c_0 \a_l^\d \|r_l^\d\|^2 + \frac{1}{4\sigma} t^2 \|m_l^\d\|^2 
+ t \tilde \gamma_l^\d  - \frac{1}{2\sigma} \a_l^\d t \l g_l^\d, m_l^\d\r,
$$
we can conclude that 
\begin{align}\label{AHB.82}
\Delta_{l+1}^\d - \Delta_l^\d \le h_l(\beta_l^\d) \le h_l(0) = - c_0 \a_l^\d \|r_l^\d\|^2.
\end{align}
By virtue of this inequality with $\hat x = x^\dag$, the induction hypothesis, and Assumption \ref{ass1} (b) 
we have 
$$
D_\R^{\xi_{l+1}}(x^\dag, x_{l+1}^\d) \le D_\R^{\xi_l^\d}(x^\dag, x_l^\d) 
\le D_\R^{\xi_0}(x^\dag, x_0) \le \sigma \rho^2
$$
which together with (\ref{sc}) implies that $\sigma \|x_{l+1}^\d - x^\dag\|^2 \le \sigma \rho^2$ 
and hence $\|x_{l+1}^\d - x^\dag\| \le \rho$. Since $\|x^\dag - x_0\| \le \rho$, we thus 
have $\|x_{l+1}^\d - x_0\| \le 2\rho$, i.e. $x_{l+1}^\d \in B_{2\rho}(x_0)$. We therefore complete the 
proof of (\ref{AHB.80}). As a direct consequence, we can see that 
(\ref{AHB.82}) holds for all $0\le l < k$ which shows the desired result. 
\end{proof}

Based on Lemma \ref{AHB.lem2} we now can prove that Algorithm \ref{ass1} is well-defined, i.e. iteration 
must terminate in finite many steps. 

\begin{theorem}\label{AHB:thm0}
Let Assumption \ref{ass0} and Assumption \ref{ass1} hold. Consider Algorithm \ref{alg:AHB} with noisy data 
$y^\d$ satisfying $\|y^\d - y\| \le \d$ with noise level $\d >0$. Assume that $\tau > 1$ and $\mu_0 > 0$ 
are chosen such that (\ref{mutau}) holds. Then the algorithm must terminate in finite many steps, i.e. 
there exists a finite integer $n_\d$ such that 
\begin{align}\label{DP}
\|F(x_{n_\d}^\d) - y^\d\| \le \tau \d < \|F(x_n^\d) - y^\d\|, \quad 0\le n< n_\d. 
\end{align}
\end{theorem}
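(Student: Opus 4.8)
The plan is to argue by contradiction: suppose the discrepancy principle is never triggered, so that $\|r_n^\d\| > \tau \d$ for all $n \ge 0$. Then Lemma \ref{AHB.lem2} applies with $k = \infty$, so $x_n^\d \in B_{2\rho}(x_0)$ for all $n$ and the sequence $\{\Delta_n^\d\}$ with $\hat x = x^\dag$ is monotonically non-increasing and bounded below (by $\sigma\|x_n^\d - x^\dag\|^2 \ge 0$), hence convergent. Summing the inequality $\Delta_{n+1}^\d \le \Delta_n^\d - c_0 \a_n^\d \|r_n^\d\|^2$ over $n$ yields
$$
c_0 \sum_{n=0}^\infty \a_n^\d \|r_n^\d\|^2 \le \Delta_0^\d - \lim_{n\to\infty}\Delta_n^\d < \infty,
$$
so in particular $\a_n^\d \|r_n^\d\|^2 \to 0$.

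Next I would use the lower bounds on the step-sizes coming from the two admissible choices in (\ref{alphan}) to extract a contradiction. For the constant choice $\a_n^\d = \mu_0/L^2$ this immediately gives $\|r_n^\d\|^2 \to 0$, contradicting $\|r_n^\d\| > \tau\d > 0$. For the adaptive choice $\a_n^\d = \min\{\mu_0\|r_n^\d\|^2/\|g_n^\d\|^2, \mu_1\}$, I would note that $\|g_n^\d\| = \|L(x_n^\d)^* r_n^\d\| \le L \|r_n^\d\|$ by Assumption \ref{ass1} (d), so $\mu_0\|r_n^\d\|^2/\|g_n^\d\|^2 \ge \mu_0/L^2$, and therefore $\a_n^\d \ge \min\{\mu_0/L^2, \mu_1\} =: \a_{\min} > 0$ in both cases. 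Consequently
$$
c_0 \a_{\min} \sum_{n=0}^\infty \|r_n^\d\|^2 \le c_0 \sum_{n=0}^\infty \a_n^\d \|r_n^\d\|^2 < \infty,
$$
forcing $\|r_n^\d\| \to 0$, which again contradicts $\|r_n^\d\| > \tau\d$ for all $n$, since $\tau\d > 0$. Hence there must be a first index $n_\d$ with $\|r_{n_\d}^\d\| \le \tau\d$, and by minimality $\|r_n^\d\| > \tau\d$ for $0 \le n < n_\d$, which is exactly (\ref{DP}).

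The one point requiring a little care — and the main obstacle — is making sure Lemma \ref{AHB.lem2} is genuinely applicable in the non-terminating case, i.e. that $x^\dag$ indeed lies in $B_{2\rho}(x_0) \cap \mathrm{dom}(\R)$ so that it is a legitimate choice for $\hat x$; this is guaranteed by the discussion following (\ref{Rmin}), where it was shown that $\|x^\dag - x_0\| \le \rho$, and by the fact that $D_\R^{\xi_0}(x^\dag,x_0)$ is finite so $x^\dag \in \mathrm{dom}(\R)$. Everything else is a routine summation argument. A minor subtlety is that the hypothesis of Lemma \ref{AHB.lem2} is phrased for finite $k$, so strictly I would apply it for each finite $k$ under the standing assumption $\|r_n^\d\| > \tau\d$ for all $n$, obtaining the telescoped bound $c_0 \sum_{n=0}^{k-1} \a_n^\d \|r_n^\d\|^2 \le \Delta_0^\d$ uniformly in $k$, and then let $k \to \infty$.
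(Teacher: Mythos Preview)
Your proof is correct and follows essentially the same approach as the paper: both apply Lemma \ref{AHB.lem2} to telescope the descent inequality, use the lower bound $\a_n^\d \ge \min\{\mu_0/L^2,\mu_1\}$ coming from $\|L(x_n^\d)\|\le L$, and derive a contradiction with $\|r_n^\d\|>\tau\d$. The only cosmetic difference is that the paper bounds the partial sum from below by $(l+1)c_0\min\{\mu_0/L^2,\mu_1\}\tau^2\d^2$ and lets $l\to\infty$ directly, whereas you first conclude $\|r_n^\d\|\to 0$; the content is the same.
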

	
\begin{proof}
Let $l\ge 0$ be an integer such that $\|F(x_n^\d) - y^\d\|>\tau \d$ for all $0\le n \le l$. It then follows from Lemma \ref{AHB.lem2} that 
\begin{align*}
c_0 \sum_{n=0}^l \a_n^\d \|F(x_n^\d) - y^\d\|^2 
& \le \sum_{n=0}^l \left(\Delta_n^\d - \Delta_{n+1}^\d \right) \\
& = \Delta_0^\d - \Delta_{l+1}^\d \le \Delta_0^\d \\
& = D_\R^{\xi_0} (\hat x, x_0). 
\end{align*}
According to the choice of $\a_n^\d$ and $\|L(x)\|\leq L$ for all $x\in B_{2\rho}(x_0)$, we can see $\a_n^\d \ge \min\{\mu_0/L^2, \mu_1\}$ for all $0\le n\le l$. Thus 
\begin{align*}
(l+1) c_0 \min\{\mu_0/L^2, \mu_1\} \tau^2 \d^2 \le c_0 \sum_{n=0}^l \a_n^\d \|F(x_n^\d) - y^\d\|^2 
\le D_\R^{\xi_0}(\hat x, x_0) < \infty. 
\end{align*}
If there is no finite integer $n_\d$ such that (\ref{DP}) holds, then we can take $l$ to be any positive integer. Letting $l \to \infty$ in the above equation gives a contradiction. Thus, the algorithm must terminate in finite many steps. 
\end{proof}

\section{\bf Convergence analysis}\label{sect3}

In this section we will provide convergence analysis on Algorithm \ref{alg:AHB}. Let $n_\d$ be the integer 
output by the discrepancy principle, we will analyze the behavior of $x_{n_\d}^\d$ as $\d \to 0$. We first 
prove a weak convergence result as stated below.
	
\begin{theorem}\label{AHB:thm1}
Let Assumption \ref{ass0} and Assumption \ref{ass1} hold. Consider Algorithm \ref{alg:AHB} using a 
sequence of noisy data $\{y^{\d_k}\}$ satisfying $\|y^{\d_k} - y\| \le \d_k$ with $0<\d_k \to 0$ 
as $k\to \infty$. Assume that $\tau > 1$ and $\mu_0 > 0$ are chosen such that (\ref{mutau}) holds. Let 
$n_k := n_{\d_k}$ be the output integer. Then there is a subsequence $\{y^{\d_{k_l}}\}$ of 
$\{y^{\d_k}\}$ such that 
$$
x_{n_{k_l}}^{\d_{k_l}} \rightharpoonup x^* \quad \mbox{ as } l \to \infty
$$
for some solution $x^*$ of (\ref{AHB.1}) in $B_{2\rho}(x_0)$.

If in addition $\R(x) = \|x\|^2/2$ and $\emph{Ker}(L(x^\dag))\subset\emph{Ker}(L(x))$ for 
all $x\in B_{2\rho}(x_0)$, then $x_{n_k}^{\d_k} \rightharpoonup x^\dag$ as $k \to \infty$, 
where $x^\dag$ is the unique $x_0$-minimum norm solution of (\ref{AHB.1}). 
\end{theorem}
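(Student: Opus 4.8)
The plan is to establish the weak subsequential convergence first, then upgrade it to full convergence in the quadratic case. For the first part, I would begin by invoking Lemma \ref{AHB.lem2}, which tells us that for each noisy data $y^{\d_k}$ the iterates stay in $B_{2\rho}(x_0)$, that $n_k$ is finite (Theorem \ref{AHB:thm0}), and that $D_\R^{\xi_{n_k}^{\d_k}}(x^\dag, x_{n_k}^{\d_k}) \le D_\R^{\xi_0}(x^\dag, x_0)$. By the strong convexity estimate (\ref{sc}), this gives $\|x_{n_k}^{\d_k} - x^\dag\| \le \rho$, so the sequence $\{x_{n_k}^{\d_k}\}$ is bounded; hence a subsequence $\{x_{n_{k_l}}^{\d_{k_l}}\}$ converges weakly to some $x^* \in X$. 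The key step is to show $F(x^*) = y$: since $\|r_{n_k}^{\d_k}\| = \|F(x_{n_k}^{\d_k}) - y^{\d_k}\| \le \tau \d_k \to 0$ and $\|y^{\d_k} - y\| \le \d_k \to 0$, we get $F(x_{n_{k_l}}^{\d_{k_l}}) \to y$ strongly; combined with $x_{n_{k_l}}^{\d_{k_l}} \rightharpoonup x^*$ and the weak closedness of $F$ (Assumption \ref{ass1}(c)), this yields $x^* \in \mathrm{dom}(F)$ and $F(x^*) = y$. Since the iterates lie in the (weakly closed) ball $B_{2\rho}(x_0)$, we also have $x^* \in B_{2\rho}(x_0)$, completing the first claim.

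For the second part, with $\R(x) = \|x\|^2/2$ we have $\sigma = 1/2$, $\xi_n^\d = x_n^\d$, and $D_\R^{\xi}(z,x) = \|z - x\|^2/2$, so the whole algorithm runs in $X$ directly. The strategy is the standard one for minimum-norm solution identification: show that $x_n^{\d} - x_0$ stays in (the closure of) the range of the relevant adjoint operators, so that its weak limit is orthogonal to $\mathrm{Ker}(L(x^\dag))$, and then use uniqueness of the $x_0$-minimum-norm solution together with a subsequence-limit argument to upgrade to convergence of the full sequence. Concretely, I would argue inductively that $\xi_n^\d - \xi_0 \in \overline{\mathrm{Ran}}\big(\sum \text{terms of the form } L(x_k^\d)^*(\cdot)\big)$: each update adds $-\a_n^\d L(x_n^\d)^* r_n^\d$ plus a multiple of $m_n^\d = \xi_n^\d - \xi_{n-1}^\d$, and since $m_n^\d$ is itself a telescoped combination of earlier such terms, the momentum term introduces nothing new — so $x_n^\d - x_0 \in \overline{\mathrm{span}}\{L(x_k^\d)^* w : 0 \le k < n, w \in Y\}$. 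Using $\mathrm{Ker}(L(x^\dag)) \subset \mathrm{Ker}(L(x))$ for all $x \in B_{2\rho}(x_0)$, we get $\mathrm{Ran}(L(x)^*) = \mathrm{Ker}(L(x))^\perp \subset \mathrm{Ker}(L(x^\dag))^\perp$, hence $x_n^\d - x_0 \perp \mathrm{Ker}(L(x^\dag))$ for all $n$, and this orthogonality passes to the weak limit: $x^* - x_0 \perp \mathrm{Ker}(L(x^\dag))$.

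Now I would show any weak subsequential limit equals $x^\dag$. Both $x^*$ and $x^\dag$ solve $F(x) = y$ in $B_{2\rho}(x_0)$; the tangential cone condition (Assumption \ref{ass1}(d)) with $x = x^*$, $\bar x = x^\dag$ forces $\|L(x^\dag)(x^* - x^\dag)\| \le \|F(x^*) - F(x^\dag)\| / (1-\eta) \cdot 0$, wait — more directly, $F(x^*) = F(x^\dag) = y$ gives $\|L(x^\dag)(x^* - x^\dag)\| = \|L(x^\dag)(x^* - x^\dag) - (F(x^*) - F(x^\dag))\| \le \eta \|F(x^*) - F(x^\dag)\| = 0$, so $x^* - x^\dag \in \mathrm{Ker}(L(x^\dag))$. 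Combined with $x^* - x_0 \perp \mathrm{Ker}(L(x^\dag))$ and the characterization (\ref{Rmin}) of $x^\dag$ (which for the quadratic case means $x^\dag - x_0 \perp \mathrm{Ker}(L(x^\dag))$ as well, since $x^\dag$ minimizes $\|x - x_0\|^2$ over the solution set whose tangent space at $x^\dag$ contains $\mathrm{Ker}(L(x^\dag))$), we get $x^* - x^\dag \perp \mathrm{Ker}(L(x^\dag))$ and $x^* - x^\dag \in \mathrm{Ker}(L(x^\dag))$, hence $x^* = x^\dag$. Since every weakly convergent subsequence of the bounded sequence $\{x_{n_k}^{\d_k}\}$ has the same limit $x^\dag$, the whole sequence converges weakly to $x^\dag$.

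The main obstacle I anticipate is the range-invariance argument in the quadratic case, specifically making the claim $x_n^\d - x_0 \perp \mathrm{Ker}(L(x^\dag))$ fully rigorous: one must carefully track that the momentum term $m_n^\d = \xi_n^\d - \xi_{n-1}^\d$ does not leak out of $\overline{\mathrm{Ran}}(L(x^\dag)^*)$, which requires a clean induction on $n$ showing both $\xi_n^\d - \xi_0$ and $\xi_n^\d - \xi_{n-1}^\d$ lie in that closed subspace, using that each $L(x_k^\d)^*$ maps into $\mathrm{Ker}(L(x^\dag))^\perp$ by the kernel inclusion hypothesis. A secondary subtlety is justifying that $x^\dag - x_0$ itself lies in $\mathrm{Ker}(L(x^\dag))^\perp$; this should follow from the variational characterization (\ref{Rmin}) and Assumption \ref{ass1}(d), but the nonlinearity means one should argue via the fact that $x^\dag$ minimizes $\|x - x_0\|$ over $\{x : F(x) = y\} \cap B_{2\rho}(x_0)$ and that $\mathrm{Ker}(L(x^\dag))$ is contained in the "linearized" solution set, using the tangential cone condition to control the error — an argument already present in the cited reference \cite{JW2013} that I would adapt or cite directly.
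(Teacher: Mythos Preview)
Your proposal is correct and follows essentially the same approach as the paper: boundedness from Lemma \ref{AHB.lem2} plus weak closedness for part one, and for part two the range/kernel argument showing $x_n^\d - x_0 \in \mathrm{Ker}(L(x^\dag))^\perp$, combined with $x^\dag - x_0 \in \mathrm{Ker}(L(x^\dag))^\perp$ from the variational characterization and $x^* - x^\dag \in \mathrm{Ker}(L(x^\dag))$ from the tangential cone condition. The paper spells out the second ``subtlety'' you flag directly rather than citing \cite{JW2013}: for $w \in \mathrm{Ker}(L(x^\dag))$ one uses Assumption \ref{ass1}(d) to get $\|F(x^\dag + tw) - y\| \le \eta \|F(x^\dag + tw) - y\|$, hence $F(x^\dag + tw) = y$ for small $|t|$, and then the minimality of $\|x^\dag - x_0\|$ forces $\langle x^\dag - x_0, w\rangle = 0$.
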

	
\begin{proof}
By using Lemma \ref{AHB.lem2} with $\hat x = x^\dag$ and (\ref{sc}), we have 
$$
\sigma \|x_{n_k}^{\d_k} - \hat x\|^2 \le D_\R^{\xi_{n_k}^{\d_k}} (\hat x, x_{n_k}^{\d_k}) 
\le D_\R^{\xi_0}(\hat x, x_0)
$$ 
which implies that $\{x_{n_k}^{\d_k}\}$ is a bounded sequence. Thus, we can find a subsequence 
$\{y^{\d_{k_l}}\}$ of $\{y^{\d_l}\}$ such that $x_{n_{k_l}}^{\d_{k_l}} \rightharpoonup x^*$ 
as $l\to \infty$ for some $x^* \in X$. Because $\|F(x_{n_{k_l}}^{\d_{k_l}}) - y^{\d_{k_l}}\|
\le \tau \d_{k_l}$, we have $F(x_{n_{k_l}}^{\d_{k_l}}) \to y$ as $l\to \infty$. Thus, by the 
weak closedness of $F$ given in Assumption \ref{ass1} (c), we have $x^* \in \mbox{dom}(F)$ 
and $F(x^*) = y$. Furthermore, by the weak lower semi-continuity of norms we have 
\begin{align*}
\|x^* - x^\dag\|^2 \le \liminf_{l\to\infty} \|x_{n_{k_l}}^{\d_{k_l}} - x^\dag\|^2 
\le \frac{1}{\sigma} D_\R^{\xi_0}(x^\dag, x_0) \le \rho^2
\end{align*}
which together with $x^\dag \in B_\rho(x_0)$ implies that $x^* \in B_{2\rho}(x_0)$. 

When $\R(x) = \|x\|^2/2$ and $\mbox{Ker}(L(x^\dag))\subset\mbox{Ker}(L(x))$ for all $x\in B_{2\rho}(x_0)$, 
we show that  $x_{n_k}^{\d_k} \rightharpoonup x^\dag$ as $k \to \infty$, It suffices to show that 
$x^\dag$ is the unique weak cluster point of $\{x_{n_k}^{\d_k}\}$. For the given $\R$ we have 
$\xi_n^{\d_k} = x_n^{\d_k}$ for all $n$ and by the definition of $\xi_n^{\d_k}$ we can see that 
\begin{align*}
x_n^{\d_k} - x_0 
&\in \mbox{Ran}(L(x_{n-1}^{\d_k})^*) \oplus \cdots \oplus \mbox{Ran}(L(x_0^{\d_k})^*) \\
&\subset  \mbox{Ker}(L(x_{n-1}^{\d_k}))^\perp \oplus \cdots \oplus \mbox{Ker}(L(x_0^{\d_k}))^\perp \\
&\subset  \mbox{Ker}(L(x^\dag))^\perp
\end{align*}
for all $n$ which in particular implies that $x_{n_k}^{\d_k} - x_0 \in \mbox{Ker}(L(x^\dag))^\perp$. 
Consequently, for any weak cluster point $\bar x$ of  $\{x_{n_k}^{\d_k}\}$ we have 
$\bar x - x_0 \in \mbox{Ker}(L(x^\dag))^\perp$. Since $x^\dag$ is the $x_0$-minimum norm solution
of (\ref{AHB.1}), we must have $x^\dag - x_0 \in \mbox{Ker}(L(x^\dag))^\perp$. Indeed, for any 
$w \in \mbox{Ker}(L(x^\dag))$ we have $x^\dag + t w \in B_{2\rho}(x_0)$ for small $|t|$. Thus, it 
follows from Assumption \ref{ass1} (d) that 
\begin{align*}
\|F(x^\dag + t w) - F(x^\dag)\| 
& = \|F(x^\dag + t w) - F(x^\dag) - L(x^\dag) (t w)\| \\
&\le \eta \|F(x^\dag + t w) - F(x^\dag)\|
\end{align*}
which implies $F(x^\dag + t w) = F(x^\dag) = y$ for small $|t|$ as $0\le \eta <1$. Since $x^\dag$ is the 
$x_0$-minimal norm solution of (\ref{AHB.1}), we have $\|x^\dag + t w - x_0\|^2 \ge \|x^\dag - x_0\|^2$ 
for small $|t|$ and thus $\l x^\dag - x_0, w\r = 0$ for all $w \in \mbox{Ker}(L(x^\dag))$. Therefore 
$\bar x - x^\dag = (\bar x - x_0) + (x_0 - x^\dag) \in \mbox{Ker}(L(x^\dag))^\perp$

On the other hand, since both $\bar x$ and $x^\dag$ are solutions of $F(x) = y$ in $B_{2\rho}(x_0)$, 
we may use Assumption \ref{ass1} (d) to obtain 
\begin{align*}
\|L(x^\dag)(\bar x - x^\dag) \| \le (1+\eta) \|F(\bar x) - F(x^\dag)\| = 0
\end{align*}
which means $\bar x - x^\dag \in  \mbox{Ker}(L(x^\dag))$. Therefore 
$$
\bar x - x^\dag \in \mbox{Ker}(L(x^\dag))^\perp \cap  \mbox{Ker}(L(x^\dag)) = \{0\}
$$ 
and thus $\bar x = x^\dag$. The proof is complete. 
\end{proof}
	
The above theorem gives a weak convergence result on Algorithm \ref{alg:AHB} for any 
$\beta \in (0, \infty]$. However, it remains uncertain if a strong convergence result can 
be assured for general $\beta$. Nevertheless, by confining $\beta$ to be less than 1, we 
can establish a strong convergence result for Algorithm \ref{alg:AHB} in the remaining part 
of this section. The proof of strong convergence is rather challenging. We must explore the 
counterpart of Algorithm \ref{alg:AHB} utilizing the exact data $y$, which can be formulated 
as follows.

\begin{algorithm}\label{alg:AHB0}
Take an initial guess $\xi_0$, calculate $x_0 := \arg\min_{x\in X} \{\R(x) - \l \xi_0, x\r\}$, and set 
$\xi_{-1} = \xi_0$. Pick the numbers $0<\beta\le \infty$, $\mu_0>0$ and $\mu_1>0$. For $n\ge 0$ do 
the following: 
		
\begin{enumerate}[leftmargin = 0.8cm]
\item[\emph{(i)}] Calculate $r_n := F(x_n) - y$ and $g_n := L(x_n)^* r_n$;
			
\item[\emph{(ii)}] Determine $\a_n$ according to 
\begin{align*}
\a_n := \frac{\mu_0}{L^2} \quad \mbox{or } \quad 
\a_n := \left\{\begin{array}{lll}
\min\left\{\frac{\mu_0\|r_n\|^2}{\|g_n\|^2}, \mu_1\right\} & \emph{if } r_n \ne 0,\\
0 & \emph{if } r_n =0;
\end{array} \right.
\end{align*}
			
\item[\emph{(iii)}] Calculate $m_n := \xi_n - \xi_{n-1}$ and determine $\tilde \gamma_n$ by 
\begin{align*}%\label{etan0}
 \tilde \gamma_n = \left\{\begin{array}{lll} 
0 & \emph{if } n =0, \\
\l m_n, x_n - x_{n-1} \r - (1-\eta)\a_{n-1} \|r_{n-1}\|^2 + \beta_{n-1} \tilde \gamma_{n-1} & \emph{if } n \ge 1;
\end{array}\right.
\end{align*}
			
\item[\emph{(iv)}] Calculate $\beta_n$ by 
\begin{align*}
\beta_n = \left\{\begin{array}{lll}
\min\left\{\max\left\{0, 
\frac{\a_n \l g_n, m_n\r - 2\sigma \tilde \gamma_n}{\|m_n\|^2}\right\}, \beta\right\} & \emph{ if } m_n \ne 0,\\
0 & \emph{ if } m_n =0; 
\end{array}\right.
\end{align*}
			
\item[\emph{(v)}] Update $\xi_{n+1}$ and $x_{n+1}$ by 
$$
\xi_{n+1} = \xi_n - \a_n g_n + \beta_n m_n, \quad 
x_{n+1} = \arg\min_{x\in X} \{\R(x) - \l \xi_{n+1}, x\r\}. 
 $$
\end{enumerate}
\end{algorithm}
	
For the iterative sequence $\{x_n\}$ obtained by Algorithm \ref{alg:AHB0}, we first have the 
following result. 

\begin{lemma}\label{AHB.lem3}
Let Assumption \ref{ass0} and Assumption \ref{ass1} hold and consider Algorithm \ref{alg:AHB0} with 
$0<\mu_0<4\sigma (1-\eta)$. Then $x_n \in B_{2\rho}(x_0)$ for all integers $n\ge 0$ and for any solution 
$\hat x$ of (\ref{AHB.1}) in $B_{2\rho}(x_0)\cap \emph{dom}(\R)$ there holds
\begin{align*}
\Delta_{n+1} \le \Delta_n - c_1 \a_n\|F(x_n)-y\|^2, \quad\forall n\ge 0,
\end{align*}
where $\Delta_n := D_\R^{\xi_n}(\hat x, x_n)$ and $c_1: = 1 -\eta - \mu_0/(4\sigma) > 0$. Consequently 
$\{\Delta_n\}$ is monotonically decreasing and 
$$
\sum_{n=0}^\infty \a_n \|F(x_{n}) - y\|^2  < \infty. 
$$
\end{lemma}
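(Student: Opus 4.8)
The plan is to mirror the derivation that led to inequality~(\ref{AHB.6}) in the noise-free setting, exploiting the fact that here we may take $\d = 0$ and that the key parameter restriction becomes $c_1 = 1 - \eta - \mu_0/(4\sigma) > 0$ rather than $c_0>0$. First I would argue by induction on $n$ that $x_n \in B_{2\rho}(x_0)$ together with the monotonicity estimate $D_\R^{\xi_n}(x^\dag, x_n) \le D_\R^{\xi_0}(x^\dag, x_0) \le \sigma\rho^2$; the base case $n=0$ is trivial since $\xi_0^\d$-free $\xi_0 = \xi_0$ and $x_0$ is the corresponding minimizer. For the induction step, assuming $x_k \in B_{2\rho}(x_0)$ for all $0\le k\le l$, I would reproduce the computation preceding (\ref{AHB.83}) with $y^\d$ replaced by $y$ (so all the $\d\|r_n\|$ terms vanish), obtaining
\begin{align*}
\Delta_{l+1} - \Delta_l
& \le \frac{1}{4\sigma}(\a_l)^2\|g_l\|^2 - (1-\eta)\a_l\|r_l\|^2 + \beta_l \gamma_l
+ \frac{1}{4\sigma}(\beta_l)^2\|m_l\|^2 - \frac{1}{2\sigma}\a_l\beta_l\l g_l, m_l\r,
\end{align*}
where $\gamma_l := \l m_l, x_l - \hat x\r$. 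Plugging in either choice of $\a_l$ from step~(ii) and using $\|L(x)\|\le L$ gives $\frac{1}{4\sigma}(\a_l)^2\|g_l\|^2 \le \frac{\mu_0}{4\sigma}\a_l\|r_l\|^2$, which combines with $-(1-\eta)\a_l\|r_l\|^2$ to yield $-c_1\a_l\|r_l\|^2$ plus the momentum terms.

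Next I would reprove the analogue of Lemma~\ref{AHB.lem1} for Algorithm~\ref{alg:AHB0}: a straightforward induction, using the recursion for $\gamma_n$ (which is the $\d=0$ specialization of the computation before (\ref{AHB.4})) and the nonnegativity of $\beta_n$, shows $\gamma_n \le \tilde\gamma_n$ for all $n$ up to the current stage. With this in hand, the momentum terms in the displayed bound are dominated by $h_l(\beta_l)$ where $h_l(t) := -c_1\a_l\|r_l\|^2 + \frac{1}{4\sigma}t^2\|m_l\|^2 + t\tilde\gamma_l - \frac{1}{2\sigma}\a_l t\l g_l, m_l\r$. Since $\beta_l$ is by construction the minimizer of $h_l$ over $[0,\beta]$, we get $\Delta_{l+1} - \Delta_l \le h_l(\beta_l) \le h_l(0) = -c_1\a_l\|r_l\|^2 \le 0$. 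Then $D_\R^{\xi_{l+1}}(x^\dag, x_{l+1}) \le D_\R^{\xi_0}(x^\dag, x_0) \le \sigma\rho^2$, and (\ref{sc}) gives $\|x_{l+1} - x^\dag\| \le \rho$, hence $\|x_{l+1} - x_0\| \le 2\rho$, completing the induction. The per-step decrease $\Delta_{n+1} \le \Delta_n - c_1\a_n\|F(x_n)-y\|^2$ then holds for all $n\ge 0$.

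Finally, monotonicity of $\{\Delta_n\}$ is immediate from $c_1\a_n\|F(x_n)-y\|^2\ge 0$, and summing the inequality telescopically over $n=0,\dots,N$ gives $c_1\sum_{n=0}^N \a_n\|F(x_n)-y\|^2 \le \Delta_0 - \Delta_{N+1} \le \Delta_0 = D_\R^{\xi_0}(\hat x, x_0) < \infty$; letting $N\to\infty$ yields the stated summability. The main obstacle is bookkeeping rather than conceptual: one must carefully verify that the noise-free recursion for $\tilde\gamma_n$ in step~(iii) of Algorithm~\ref{alg:AHB0} indeed majorizes $\gamma_n$ — i.e. that dropping the $\d$-terms is legitimate because they were nonnegative contributions being discarded — and that both admissible choices of $\a_n$ satisfy the single bound $\frac{1}{4\sigma}\a_n^2\|g_n\|^2 \le \frac{\mu_0}{4\sigma}\a_n\|r_n\|^2$ uniformly (for the constant choice this uses $\|g_n\| = \|L(x_n)^*r_n\| \le L\|r_n\|$, and for the adaptive choice it is immediate from $\a_n\|g_n\|^2 \le \mu_0\|r_n\|^2$). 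Once these two points are checked, the argument is a direct transcription of the proofs of Lemma~\ref{AHB.lem1} and Lemma~\ref{AHB.lem2}.
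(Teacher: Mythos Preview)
Your proposal is correct and follows precisely the approach the paper intends: the paper's own proof consists of the single sentence ``By the similar argument in the proof of Lemma~\ref{AHB.lem2} we can obtain the result immediately,'' and you have supplied exactly those details---the noise-free analogue of Lemma~\ref{AHB.lem1}, the induction establishing $x_n\in B_{2\rho}(x_0)$, the use of the minimizing property of $\beta_n$ to obtain $h_l(\beta_l)\le h_l(0)$, and the telescoping sum. Your careful check that both step-size rules satisfy $\a_n\|g_n\|^2\le\mu_0\|r_n\|^2$ is the only point requiring attention beyond transcription, and you have handled it correctly.
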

	
\begin{proof}
By the similar argument in the proof of Lemma \ref{AHB.lem2} we can obtain the result immediately.
\end{proof}

Lemma \ref{AHB.lem3} has the following consequences which will be used later. 

\begin{lemma}\label{AHB.lem7}
Let Assumption \ref{ass0} and Assumption \ref{ass1} hold and consider Algorithm \ref{alg:AHB0} with $0<\mu_0<4\sigma(1-\eta)$.
	
\begin{enumerate}[leftmargin = 0.8cm]
\item[\emph{(i)}] If $F(x_n) - y=0$ for some $n\ge 0$, then $x_m = x_n$ for all $m > n$.
		
\item[\emph{(ii)}] If $\xi_{n+1} = \xi_n$ for some $n \ge 0$, then $F(x_n) - y=0$, $x_m = x_n$ and 
$\xi_m = \xi_n$ for all $m > n$. 
\end{enumerate}
\end{lemma}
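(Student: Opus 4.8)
The plan is to derive both statements directly from the monotonicity estimate of Lemma \ref{AHB.lem3}, the strong convexity bound (\ref{sc}), and the update formulae of Algorithm \ref{alg:AHB0}. The only step that is not purely mechanical is part (i): the trick there is to invoke Lemma \ref{AHB.lem3} with the \emph{reference solution chosen to be $x_n$ itself}.

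For (i), assume $F(x_n) = y$. Lemma \ref{AHB.lem3} already gives $x_n \in B_{2\rho}(x_0)$, and since $x_n = \nabla\R^*(\xi_n)$ we have $\xi_n \in \p\R(x_n)$, hence $x_n \in \mbox{dom}(\R)$; thus $x_n$ is a solution of (\ref{AHB.1}) lying in $B_{2\rho}(x_0)\cap\mbox{dom}(\R)$, and we may apply Lemma \ref{AHB.lem3} with $\hat x = x_n$. Writing $\Delta_m := D_\R^{\xi_m}(x_n, x_m)$, the lemma yields $\Delta_{m+1} \le \Delta_m$ for every $m \ge 0$, so $\Delta_m \le \Delta_n = D_\R^{\xi_n}(x_n,x_n) = 0$ for all $m \ge n$. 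On the other hand (\ref{sc}) gives $\Delta_m \ge \sigma\|x_m - x_n\|^2 \ge 0$. Hence $\Delta_m = 0$ and $x_m = x_n$ for all $m \ge n$.

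For (ii), assume $\xi_{n+1} = \xi_n$. Then $x_{n+1} = \nabla\R^*(\xi_{n+1}) = \nabla\R^*(\xi_n) = x_n$, so for any fixed solution $\hat x$ of (\ref{AHB.1}) in $B_{2\rho}(x_0)\cap\mbox{dom}(\R)$ we get $D_\R^{\xi_{n+1}}(\hat x, x_{n+1}) = D_\R^{\xi_n}(\hat x, x_n)$, i.e. $\Delta_{n+1} = \Delta_n$. Comparing this equality with the strict descent $\Delta_{n+1} \le \Delta_n - c_1 \a_n\|F(x_n)-y\|^2$ from Lemma \ref{AHB.lem3} and using $c_1 > 0$, $\a_n \ge 0$ forces $\a_n\|F(x_n)-y\|^2 = 0$; for either step-size rule in Algorithm \ref{alg:AHB0} this gives $F(x_n) - y = 0$ (immediate for the constant choice $\a_n = \mu_0/L^2>0$, and for the adaptive choice because $\a_n$ is strictly positive whenever $F(x_n) \ne y$). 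Now part (i) yields $x_m = x_n$ for all $m > n$, and $\xi_m = \xi_n$ follows by a short induction: if $\xi_k = \xi_n$ for all $n \le k \le m$ with $m \ge n+1$, then $m_m = \xi_m - \xi_{m-1} = 0$, hence $\beta_m = 0$, while $x_m = x_n$ gives $F(x_m) - y = 0$ and thus $g_m = 0$; therefore $\xi_{m+1} = \xi_m - \a_m g_m + \beta_m m_m = \xi_m = \xi_n$.

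There is no real obstacle once Lemma \ref{AHB.lem3} is available; the points requiring attention are the choice $\hat x = x_n$ in part (i), the observation that $\xi_{n+1} = \xi_n$ turns the monotonicity estimate into an equality (which squeezes out the term $\a_n\|F(x_n)-y\|^2$), and the separate treatment of the two step-size rules when passing from $\a_n\|F(x_n)-y\|^2 = 0$ to $F(x_n) = y$.
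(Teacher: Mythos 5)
Your proposal is correct and follows essentially the same route as the paper: part (i) applies Lemma \ref{AHB.lem3} with $\hat x = x_n$ and uses (\ref{sc}), and part (ii) exploits $x_{n+1}=x_n$ to turn the descent inequality into $\a_n\|F(x_n)-y\|^2=0$, deduces $F(x_n)=y$ from the positivity of either step-size rule, and finishes with the same induction for $\xi_m=\xi_n$.
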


\begin{proof}
(i) If $F(x_n) - y=0$ for some $n$, then $x_n$ is a solution of (\ref{AHB.1}) in $B_{2\rho}(x_0)\cap \mbox{dom}(\R)$. 
Thus we may use Lemma \ref{AHB.lem3} with $\hat x = x_n$ to conclude $D_\R^{\xi_m}(x_m, x_n) 
\le D_\R^{\xi_n}(x_n, x_n) = 0$ for all $m > n$. Consequently, it follows from (\ref{sc}) that 
$x_m = x_n$ for all $m > n$.
	
(ii) If $\xi_{n+1} = \xi_n$ for some $n$, then $x_{n+1} = \nabla \R^*(\xi_{n+1}) = \nabla \R^*(\xi_n) 
= x_n$. We thus have from Lemma \ref{AHB.lem3} that 
\begin{align*}
D_\R^{\xi_n}(x_n, \hat x)  = D_\R^{\xi_{n+1}}(x_{n+1}, \hat x) 
\le D_\R^{\xi_n}(x_n, \hat x) - c_1  \a_n \|F(x_n) - y\|^2
\end{align*}
which implies $\a_n \|F(x_n) - y\|^2 =0$. If $F(x_n) \ne y$, then the choice of $\a_n$ implies 
$\a_n>0$ and thus $\a_n \|F(x_n) - y\|^2>0$ which is a contradiction. Therefore we must have 
$F(x_n) = y$. By using (i) we then have $x_m = x_n$ for all $m \ge n$. Using these facts and an 
induction argument we can conclude that $\xi_m = \xi_n$ for all $m > n$. Indeed, this is trivial 
for $m = n+1$ by the given condition. Assume next $\xi_m = \xi_n$ for all $n < m \le k$ for
some $k>n$. Then, by the definition of $\xi_{k+1}$, we have 
\begin{align*}
\xi_{k+1} & = \xi_k - \a_k L(x_k)^*(F(x_k) - y) + \beta_k(\xi_k - \xi_{k-1})\\
& = \xi_n - \a_k L(x_n)^*(F(x_n) - y) = \xi_n. 
\end{align*}
By the induction principle, we thus complete the proof. 
\end{proof}

In order to show the strong convergence of the iterative sequence $\{x_n\}$ generated by 
Algorithm \ref{alg:AHB0}, we need the following result; see \cite[Proposition 2.3]{JL2014} 
or \cite[Proposition 3.6]{JW2013}.

\begin{proposition} \label{general}
Let Assumption \ref{ass0} and Assumption \ref{ass1} hold and consider the equation (\ref{AHB.1}). 
Let $\{x_n\}\subset B_{2\rho}(x_0) \cap \emph{dom}(\R)$ and $\{\xi_n\}\subset X$ be such that 

\begin{enumerate}[leftmargin = 0.8cm]
\item[\emph{(i)}] $\xi_n\in \p \R(x_n)$ for all $n$;

\item[\emph{(ii)}] for any solution $\hat x$ of (\ref{AHB.1}) in $B_{2\rho}(x_0) \cap \emph{dom}(\R)$ 
the sequence $\{D_\R^{\xi_n}(\hat x, x_n)\}$ is convergent;

\item[\emph{(iii)}] $\lim_{n\to \infty} \| F(x_n)-y\|=0$.

\item[\emph{(iv)}] there is a subsequence $\{n_k\}$ of integers with $n_k\rightarrow \infty$ such 
that for any solution $\hat x$ of (\ref{AHB.1}) in $B_{2\rho}(x_0) \cap \emph{dom}(\R)$ there holds
\begin{equation}\label{AHB.60}
 \lim_{l\rightarrow \infty} \sup_{k\ge l} |\l \xi_{n_k}-\xi_{n_l}, x_{n_k}-\hat x\r| =0.
\end{equation}
\end{enumerate}
Then there exists a solution $x^*$ of (\ref{AHB.1}) in $B_{2\rho}(x_0) \cap \emph{dom}(\R)$ such that
\begin{equation*}
\lim_{n\rightarrow \infty} D_\R^{\xi_n}(x^*, x_n) = 0.
\end{equation*}
If, in addition, $\xi_{n+1}-\xi_n \in \emph{Ker}(L(x^\dag))^\perp$ for all $n$, then $x^*=x^\dag$.
\end{proposition}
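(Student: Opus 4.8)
The plan is to run the standard Bregman distance compactness argument, invoking hypotheses (i)--(iv) in turn; the crux (and the step I expect to be hardest) is a strong Cauchy estimate for the subsequence $\{x_{n_k}\}$ supplied by (iv). Everything rests on the elementary identity that, for any $z\in X$ and any indices $a,b$, since $\xi_a\in\p\R(x_a)$ and $\xi_b\in\p\R(x_b)$,
\begin{align*}
D_\R^{\xi_a}(z,x_a)=D_\R^{\xi_b}(z,x_b)-D_\R^{\xi_b}(x_a,x_b)+\l\xi_b-\xi_a,z-x_a\r,
\end{align*}
which is verified by expanding the three Bregman distances. Take $a=n_k$, $b=n_l$ with $k\ge l$, and, crucially, choose $z=x^\dag$, a solution lying in $B_{2\rho}(x_0)\cap\mbox{dom}(\R)$, rather than the not-yet-constructed $x^*$; rearranging the identity and using $D_\R^{\xi_{n_l}}(x_{n_k},x_{n_l})\ge0$ gives
\begin{align*}
0\le D_\R^{\xi_{n_l}}(x_{n_k},x_{n_l})
&=\bigl(D_\R^{\xi_{n_l}}(x^\dag,x_{n_l})-D_\R^{\xi_{n_k}}(x^\dag,x_{n_k})\bigr)\\
&\quad+\l\xi_{n_k}-\xi_{n_l},x_{n_k}-x^\dag\r.
\end{align*}
By (ii) the sequence $\{D_\R^{\xi_n}(x^\dag,x_n)\}$ converges, hence is Cauchy, so the bracketed term tends to $0$ uniformly in $k\ge l$ as $l\to\infty$; by (\ref{AHB.60}) in (iv) so does the last term. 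Therefore $\sup_{k\ge l}D_\R^{\xi_{n_l}}(x_{n_k},x_{n_l})\to0$, and (\ref{sc}) then forces $\sup_{k\ge l}\|x_{n_k}-x_{n_l}\|^2\to0$; hence $\{x_{n_k}\}$ is a Cauchy sequence and $x_{n_k}\to x^*$ strongly for some $x^*\in X$.

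Next I would identify $x^*$. Since $x_{n_k}\in B_{2\rho}(x_0)$ for all $k$ and the ball is closed, $x^*\in B_{2\rho}(x_0)$; as $F$ is continuous on $B_{2\rho}(x_0)$ (Assumption \ref{ass1}(d)) and $F(x_n)\to y$ by (iii), we get $F(x^*)=y$. To place $x^*$ in $\mbox{dom}(\R)$, fix $l$ and let $k\to\infty$ in $D_\R^{\xi_{n_l}}(x_{n_k},x_{n_l})=\R(x_{n_k})-\R(x_{n_l})-\l\xi_{n_l},x_{n_k}-x_{n_l}\r$: lower semicontinuity of $\R$ gives $D_\R^{\xi_{n_l}}(x^*,x_{n_l})\le\liminf_{k}D_\R^{\xi_{n_l}}(x_{n_k},x_{n_l})$, which is finite for $l$ large, so $\R(x^*)<\infty$; moreover $0\le D_\R^{\xi_{n_l}}(x^*,x_{n_l})\le\sup_{k\ge l}D_\R^{\xi_{n_l}}(x_{n_k},x_{n_l})\to0$. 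Thus $x^*$ is a solution of (\ref{AHB.1}) in $B_{2\rho}(x_0)\cap\mbox{dom}(\R)$, so by (ii) the full sequence $\{D_\R^{\xi_n}(x^*,x_n)\}$ converges, and since the subsequence $\{D_\R^{\xi_{n_l}}(x^*,x_{n_l})\}$ tends to $0$, so does the full sequence. This gives the first assertion.

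For the last assertion, suppose $\xi_{n+1}-\xi_n\in\mbox{Ker}(L(x^\dag))^\perp$ for all $n$; then $\xi_n-\xi_0\in\mbox{Ker}(L(x^\dag))^\perp$ for all $n$, this being a closed subspace. Because $F(x^*)=F(x^\dag)=y$ with $x^*,x^\dag\in B_{2\rho}(x_0)$, Assumption \ref{ass1}(d) with $0\le\eta<1$ forces $L(x^\dag)(x^*-x^\dag)=0$, i.e. $x^*-x^\dag\in\mbox{Ker}(L(x^\dag))$, so $\l\xi_n-\xi_0,x^\dag-x^*\r=0$ and therefore
\begin{align*}
D_\R^{\xi_n}(x^\dag,x_n)-D_\R^{\xi_n}(x^*,x_n)
&=\R(x^\dag)-\R(x^*)-\l\xi_0,x^\dag-x^*\r\\
&=D_\R^{\xi_0}(x^\dag,x_0)-D_\R^{\xi_0}(x^*,x_0)
\end{align*}
is independent of $n$. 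Letting $n\to\infty$ and using $D_\R^{\xi_n}(x^*,x_n)\to0$, the left side tends to $\lim_n D_\R^{\xi_n}(x^\dag,x_n)\ge0$, whence $D_\R^{\xi_0}(x^*,x_0)\le D_\R^{\xi_0}(x^\dag,x_0)$; since $x^\dag$ is the unique minimizer of $D_\R^{\xi_0}(\cdot,x_0)$ over the solutions of (\ref{AHB.1}) (see (\ref{Rmin})) and $x^*$ is such a solution, $x^*=x^\dag$. Aside from the strong Cauchy estimate flagged above, the remaining steps (identifying the limit, placing it in $\mbox{dom}(\R)$ by lower semicontinuity, and passing from the subsequence to the whole sequence via (ii)) are routine bookkeeping.
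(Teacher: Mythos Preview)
The paper does not give its own proof of this proposition; it merely quotes the result from \cite[Proposition 2.3]{JL2014} and \cite[Proposition 3.6]{JW2013}. Your argument is correct and is essentially the standard proof found in those references: the three-point Bregman identity plus (ii) and (iv) gives the strong Cauchy estimate for $\{x_{n_k}\}$, continuity of $F$ on $B_{2\rho}(x_0)$ and lower semicontinuity of $\R$ identify the limit $x^*$ as a solution in $B_{2\rho}(x_0)\cap\mbox{dom}(\R)$, and (ii) upgrades subsequential convergence of the Bregman distance to full-sequence convergence; the final step, exploiting $\xi_n-\xi_0\in\mbox{Ker}(L(x^\dag))^\perp$ and the uniqueness in (\ref{Rmin}), is also the standard one.
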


Based on Lemma \ref{AHB.lem3}, Lemma \ref{AHB.lem7} and Proposition \ref{general}, we are now ready 
to show the strong convergence of Algorithm \ref{alg:AHB0}. 
 
\begin{theorem}\label{AHB.lem5}
Let Assumption \ref{ass0} and Assumption \ref{ass1} hold and let $\{x_n\}$ be defined by Algorithm 
\ref{alg:AHB0} using the exact data $y$, where $0<\mu_0< 4\sigma(1-\eta)$ and $0<\beta <1$. Then 
there exists a solution $x^*$ of (\ref{AHB.1}) in $B_{2\rho}(x_0)$ such that 
\begin{align}\label{AHB.62}
\lim_{n\to \infty} D_\R^{\xi_n}(x^*, x_n) =0 \quad \mbox{ and } \quad 
\lim_{n\to \infty} \|x_n - x^*\| = 0.
\end{align}
If, in addition, $\emph{Ker}(L(x^\dag))\subset\emph{Ker}(L(x))$ for all $x\in B_{2\rho}(x_0)$, then 
$x^* = x^\dag$.
\end{theorem}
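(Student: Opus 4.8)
The plan is to verify the four hypotheses of Proposition~\ref{general} for the sequences $\{x_n\}$ and $\{\xi_n\}$ generated by Algorithm~\ref{alg:AHB0}, and then invoke it directly. Hypothesis (i) is immediate since $x_{n+1} = \nabla\R^*(\xi_{n+1})$ forces $\xi_{n+1}\in\p\R(x_{n+1})$. Hypothesis (ii) requires showing that $\{D_\R^{\xi_n}(\hat x, x_n)\}$ converges for every solution $\hat x$ of (\ref{AHB.1}) in $B_{2\rho}(x_0)\cap\mbox{dom}(\R)$; since Lemma~\ref{AHB.lem3} already gives monotone decrease (hence $\{\Delta_n\}$ is nonincreasing and bounded below by $0$ via (\ref{sc})), convergence is automatic. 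Hypothesis (iii), namely $\|F(x_n)-y\|\to 0$, also follows from Lemma~\ref{AHB.lem3}: since $\sum_n \a_n\|F(x_n)-y\|^2<\infty$ and $\a_n\ge\min\{\mu_0/L^2,\mu_1\}$ when $F(x_n)\ne y$ (and when $F(x_n)=y$ we are done by Lemma~\ref{AHB.lem7}(i)), we get $\|F(x_n)-y\|\to 0$ after handling the trivial finite-termination case.

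The substance of the proof is hypothesis (iv): producing a subsequence $\{n_k\}$ along which the ``equicontinuity'' estimate (\ref{AHB.60}) holds. The standard device (as in \cite{JW2013,JL2014}) is to choose, for each $k$, the index $n_k\in\{0,1,\dots,k\}$ minimizing $\|F(x_n)-y\|$; then $\|F(x_{n_k})-y\|\le\|F(x_n)-y\|$ for all $n\le k$, and $n_k\to\infty$. For $l\le k$ we write, using $\xi_{n_k}-\xi_{n_l}\in\p$-telescoping,
\begin{align*}
\l\xi_{n_k}-\xi_{n_l}, x_{n_k}-\hat x\r
&= \big(D_\R^{\xi_{n_l}}(\hat x, x_{n_l}) - D_\R^{\xi_{n_k}}(\hat x, x_{n_k})\big)
+ \big(D_\R^{\xi_{n_k}}(x_{n_l}, x_{n_k}) - D_\R^{\xi_{n_l}}(x_{n_l}, x_{n_l})\big),
\end{align*}
or more directly one estimates $|\l\xi_{n_k}-\xi_{n_l}, x_{n_k}-\hat x\r|$ by telescoping $\xi_{n_k}-\xi_{n_l} = \sum_{j=n_l}^{n_k-1}(\xi_{j+1}-\xi_j) = \sum_j(-\a_j g_j + \beta_j m_j)$ and bounding $\l g_j, x_{n_k}-\hat x\r$ via Assumption~\ref{ass1}(d) in terms of $\|r_j\|(\|r_{n_k}\| + \d_{\text{eff}})$-type quantities — here with exact data the ``$\d$'' terms vanish, so each term is controlled by $\a_j\|r_j\|\cdot(\text{something}\to 0)$. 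The momentum contributions $\sum_j\beta_j\l m_j, x_{n_k}-\hat x\r$ are the delicate part: this is exactly why $\tilde\gamma_n$ was introduced, and one shows $\gamma_n := \l m_n, x_n-\hat x\r \le \tilde\gamma_n$ (as in Lemma~\ref{AHB.lem1}), then uses the summability $\sum_n\a_n\|r_n\|^2<\infty$ together with $0<\beta<1$ to sum the geometric-type recursion defining $\tilde\gamma_n$ and conclude $\sum_n\beta_n|\tilde\gamma_n|<\infty$ or at least that the tail sums vanish. The constraint $\beta<1$ is what makes the recursion $\tilde\gamma_n = (\cdots) + \beta_{n-1}\tilde\gamma_{n-1}$ contractive enough that $\{\tilde\gamma_n\}$ inherits summability from the driving terms, giving (\ref{AHB.60}).

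Once hypotheses (i)--(iv) are verified, Proposition~\ref{general} yields a solution $x^*\in B_{2\rho}(x_0)\cap\mbox{dom}(\R)$ with $D_\R^{\xi_n}(x^*, x_n)\to 0$; then (\ref{sc}) gives $\sigma\|x_n-x^*\|^2\le D_\R^{\xi_n}(x^*, x_n)\to 0$, establishing both limits in (\ref{AHB.62}). For the final assertion, under the extra hypothesis $\mbox{Ker}(L(x^\dag))\subset\mbox{Ker}(L(x))$ for all $x\in B_{2\rho}(x_0)$, one checks $\xi_{n+1}-\xi_n = -\a_n g_n + \beta_n m_n = -\a_n L(x_n)^* r_n + \beta_n m_n \in \mbox{Ker}(L(x^\dag))^\perp$ for all $n$ by induction: $\mbox{Ran}(L(x_n)^*) = \mbox{Ker}(L(x_n))^\perp \subset \mbox{Ker}(L(x^\dag))^\perp$ by the kernel inclusion, and $m_n = \xi_n - \xi_{n-1}$ lies in this subspace by the induction hypothesis (with $m_0 = 0$); hence so does $\xi_{n+1}-\xi_n$. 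The ``in addition'' clause of Proposition~\ref{general} then identifies $x^* = x^\dag$.

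I expect hypothesis (iv), and specifically the bookkeeping of the momentum terms through the $\tilde\gamma_n$ recursion under the constraint $\beta<1$, to be the main obstacle; the other three hypotheses are essentially read off from Lemma~\ref{AHB.lem3} and Lemma~\ref{AHB.lem7}.
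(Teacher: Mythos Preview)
Your overall strategy---verifying the four hypotheses of Proposition~\ref{general}---is exactly what the paper does, and your treatment of (i)--(iii) and of the final kernel clause matches the paper's. The gap is in (iv), specifically in how you propose to handle the momentum terms.

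You split $\xi_{j+1}-\xi_j = -\a_j g_j + \beta_j m_j$ and then propose to control $\sum_j \beta_j\l m_j, x_{n_k}-\hat x\r$ via the surrogate $\tilde\gamma_n$. But $\tilde\gamma_n$ (through Lemma~\ref{AHB.lem1}) only gives a \emph{one-sided} bound $\gamma_n \le \tilde\gamma_n$ on $\gamma_n = \l m_n, x_n - \hat x\r$, whereas what you need is a two-sided bound on $\l m_j, x_{n_k} - \hat x\r$, with the fixed point $x_{n_k}$ rather than the running point $x_j$. The discrepancy $\l m_j, x_{n_k}-x_j\r$ has no obvious control, and even $\sum_n \beta_n|\tilde\gamma_n|<\infty$ would not close the argument. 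The paper does not use $\tilde\gamma_n$ at all in this proof.

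The missing idea is to unroll the momentum recursion itself. From $m_{n+1} = -\a_n g_n + \beta_n m_n$ with $m_0=0$ one gets the closed form
\[
\xi_{n+1}-\xi_n \;=\; -\sum_{i=0}^{n}\Bigl(\prod_{j=i+1}^{n}\beta_j\Bigr)\,\a_i\, L(x_i)^* r_i,
\]
so that $\l \xi_{n+1}-\xi_n, x_{n_k}-\hat x\r$ is a weighted sum of terms $\a_i\l r_i, L(x_i)(x_{n_k}-\hat x)\r$, each of which Assumption~\ref{ass1}(d) and the minimizing property of $n_k$ bound by $3(1+\eta)\a_i\|r_i\|^2$. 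Using $\prod_{j=i+1}^n\beta_j\le \beta^{n-i}$, the telescoped sum over $n_l\le n<n_k$ is dominated by a tail of
\[
S_N := \sum_{n=0}^{N}\sum_{i=0}^{n}\beta^{n-i}\a_i\|r_i\|^2 \;\le\; \frac{1}{1-\beta}\sum_{i=0}^{\infty}\a_i\|r_i\|^2 < \infty,
\]
where the interchange of summation and $\beta<1$ give the bound. Since $\{S_N\}$ is increasing and bounded, it is Cauchy, which yields~(\ref{AHB.60}). This is where $\beta<1$ enters---controlling the geometric weights in the unrolled $m_n$, not the $\tilde\gamma_n$ recursion.
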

	
\begin{proof}
We will use Proposition \ref{general} to prove the conclusion. The item (i) in Proposition \ref{general} 
holds automatically by the definition of $x_n$. According to Lemma \ref{AHB.lem3} we immediately obtain the item (ii) in Proposition \ref{general}. To show item (iii), we note from the choice 
of $\{\a_n\}$ that $\a_n \ge c_3:= \min\{\mu_0/L^2,\mu_1\}$ when $r_n \ne 0$. Thus 
$\a_n \|r_n\|^2 \ge c_3 \|r_n\|^2$ for all $n\ge 0$.  From Lemma \ref{AHB.lem3} it then follows 
that
$$
c_3 \sum_{n=0}^\infty \|r_n\|^2 \le \sum_{n=0}^\infty \a_n \|r_n\|^2 <\infty. 
$$
This in particular implies (iii) in Proposition \ref{general}, i.e. 
\begin{align}\label{residualn}
\lim_{n\to\infty}\| r_n \|=0.
\end{align}
We now verify (iv) in Proposition \ref{general}. By using Lemma \ref{AHB.lem7} (i) we can conclude that 
\begin{align}\label{residualm}
\| r_n \|=0 \mbox{ for some } n \Longrightarrow \| r_m \|=0 \mbox{ for all } m\geq n.
\end{align} 
In view of (\ref{residualn}) and (\ref{residualm}), we can pick a sequence $\{n_{k}\}$ of 
integers by setting $n_0=0$ and letting $n_{k}$, for each $k\geq 1$, be the first integer satisfying
$$
n_{k}\geq n_{k-1}+1\quad \text{ and }\quad \| r_{n_{k}}\|\leq \|r_{n_{k-1}}\|.
$$
It is easy to see that
\begin{align}
\| r_{n_{k}}\|\leq \| r_n\|,\qquad 0\leq n\leq n_{k}.\label{rnk}
\end{align} 
For the above chosen $\{n_{k}\}$, we now show (\ref{AHB.60}) for any solution $\hat x$ of (\ref{AHB.1}) 
in $B_{2\rho}(x_0) \cap \mbox{dom}(\R)$. For any integers $l<k$ we write 
\begin{align}\label{eq1}
\l \xi_{n_k} - \xi_{n_l},x_{n_k} - \hat x\r 
= \sum_{n=n_l}^{n_k-1} \l \xi_{n+1} - \xi_n, x_{n_k} - \hat x \r. 
\end{align}
From the definition of $\{\xi_n\}$ and an induction argument it follows readily that 
\begin{align*}
\xi_{n+1} - \xi_n = - \sum_{i=0}^n \left(\prod_{j = i+1}^n \beta_j\right) \a_i L(x_i)^* r_i.
\end{align*}
Thus 
\begin{align*}
\l \xi_{n+1} - \xi_n, x_{n_k} - \hat x\r 
& = -\sum_{i=0}^n \left(\prod_{j=i+1}^n \beta_j\right) \a_i \l L(x_i)^* r_i, x_{n_k} - \hat x\r\\
& = - \sum_{i=0}^n \left(\prod_{j=i+1}^n \beta_j\right) \a_i\l r_i, L(x_i)(x_{n_k} - \hat x) \r. 
\end{align*}
Using Assumption \ref{ass1} (d) it is easy to obtain
\begin{align*}
\|L(x_i)(x_{n_k} - \hat x)\|
&\le \|L(x_i)(x_{i} - \hat x)\| + \|L(x_i)(x_{n_k} - x_{i})\|\\
&\le (1+\eta)\left( \|r_i\| + \|F(x_i) - F(x_{n_k})\| \right)\\
&\le (1+\eta)\left( 2\|r_i\| + \|r_{n_k}\|\right).
\end{align*}
Therefore, by using the Cauchy-Schwarz inequality, (\ref{rnk}), and $0\le \beta_n\le \beta$ 
for any $n\geq 0$, we can obtain
\begin{align*}
|\l \xi_{n+1} - \xi_{n}, x_{n_{k}} - \hat x\r| 
& \le \sum_{i=0}^n \left(\prod_{j=i+1}^n \beta_j\right) \a_i \| r_i\| \|L(x_i)(x_{n_k} - \hat x)\| \\
& \le (1+\eta)\sum_{i=0}^n \left(\prod_{j=i+1}^n \beta_j\right) \a_i\|r_i\| \left( 2\|r_i\| + \|r_{n_k}\|\right)  \\
& \le 3(1+\eta)\sum_{i=0}^n \left(\prod_{j=i+1}^n \beta_j\right) \a_i \|r_i\|^2 \\
& \le 3(1+\eta)\sum_{i=0}^n \beta^{n-i} \a_i \|r_i\|^2. 
\end{align*}
This together with (\ref{eq1}) then gives  
\begin{align}\label{AHB.61}
|\l \xi_{n_k} - \xi_{n_l},x_{n_k} - \hat x\r |
\le 3(1+\eta)\sum_{n=n_l}^{n_k-1}\sum_{i=0}^n \beta^{n-i} \a_i \|r_i\|^2.
\end{align}
To proceed further, let 
$$
S_N :=\sum_{n=0}^N\sum_{i=0}^{n}\beta^{n-i}\a_{i}\|r_{i}\|^2, \quad \forall N \ge 0. 
$$
Then 
\begin{align*}
S_{N+1} - S_N 
&= \sum_{n=0}^{N+1}\sum_{i=0}^n \beta^{n-i} \a_i \|r_i\|^2
- \sum_{n=0}^N \sum_{i=0}^n \beta^{n-i} \a_i\|r_i\|^2\\
& =\sum_{i=0}^{N+1} \beta^{N+1-i} \a_i\|r_i\|^2 \ge 0.
\end{align*}
Moreover, by rearranging the terms in $S_N$, utilizing the property $0<\beta <1$, and 
invoking Lemma \ref{AHB.lem3}, we can obtain
$$
S_N = \sum_{i=0}^N \left(\sum_{n=i}^N \beta^{n-i} \right)\a_i\|r_i\|^2 
\le \frac{1}{1-\beta} \sum_{i=0}^N \a_i\|r_i\|^2 
\le \frac{1}{1-\beta} \sum_{i=0}^\infty \a_i\|r_i\|^2 <\infty.
$$
Therefore, $\{S_N\}$ is a monotonically increasing sequence with a finite upper bound, and 
thus it converges by the monotone convergence theorem. In particular, $\{S_N\}$ is a Cauchy 
sequence and hence 
$$
\lim_{l \to \infty} \sup_{k \ge l} |S_{n_k-1} - S_{n_l-1}| = 0. 
$$
Note that 
$$
|S_{n_k-1}-S_{n_l-1}| = \sum_{n=n_l}^{n_k-1} \sum_{i=0}^n \beta^{n-i}\a_i \|r_i\|^2. 
$$
We thus have from (\ref{AHB.61}) that 
\begin{align*}
\sup_{k\ge l} |\l \xi_{n_k} - \xi_{n_l},x_{n_k} - \hat x\r | 
\le 3(1+\eta) \sup_{k\ge l} |S_{n_k-1}-S_{n_l-1}| \to 0
\end{align*}
as $l\to \infty$. This shows (iv) in Proposition \ref{general}. Thus, we may use Proposition \ref{general} 
to conclude the existence of a solution $x^*$ of (\ref{AHB.1}) in $B_{2\rho}(x_0) \cap \mbox{dom}(\R)$ such 
that (\ref{AHB.62}) holds. 
			
If $\mbox{Ker}(L(x^\dag))\subset\mbox{Ker}(L(x))$ for all $x\in B_{2\rho}(x_0)$, then we may use the 
definition of $\xi_{n+1}$ to obtain
\begin{align*}
\xi_{n+1} - \xi_n 
&\in \mbox{Ran}(L(x_n)^*) \oplus \cdots \oplus \mbox{Ran}(L(x_0)^*) \\
&\subset  \mbox{Ker}(L(x_n))^\perp \oplus \cdots \oplus \mbox{Ker}(L(x_0))^\perp \\
&\subset  \mbox{Ker}(L(x^\dag))^\perp. 
\end{align*}
Therefore, we may use the last part of Proposition \ref{general} to conclude $x^* = x^\dag$.
\end{proof}

In order to establish the regularization property of Algorithm \ref{alg:AHB}, we need a stability 
result to connect Algorithm \ref{alg:AHB} with Algorithm \ref{alg:AHB0}. The following resut is 
sufficient for our purpose. 

\begin{lemma}\label{AHB.lem6}
Let Assumption \ref{ass0} and Assumption \ref{ass1} hold, let $\tau>1$ and $\mu_0>0$ be chosen 
such that (\ref{mutau}) holds, and let $\beta<\infty$. Let $\{y^{\d_l}\}$ be a sequence of 
noisy data satisfying $\|y^{\d_l} - y\| \le \d_l$ with $0 < \d_l \to 0$ as $l \to \infty$,
and let $\xi_n^{\d_l}, x_n^{\d_l}$, $0\le n \le n_{\d_l}$, be defined by Algorithm \ref{alg:AHB}
using noisy data $y^{\d_l}$, where $n_{\d_l}$ denotes the output integer. Let $\{\xi_n, x_n\}$ be 
defined by the counterpart of Algorithm \ref{alg:AHB} using the exact data $y$. Then, for any 
finite integer $\hat n \le \liminf_{l \to \infty} n_{\d_l}$ there hold
$$
\xi_n^{\d_l} \to \xi_n \quad \mbox{ and } \quad x_n^{\d_l}\to x_n \quad \text{ as } l \to \infty
$$
for all $0\le n \le \hat n$. 
\end{lemma}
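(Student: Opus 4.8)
The plan is to prove the convergence $\xi_n^{\d_l} \to \xi_n$ and $x_n^{\d_l} \to x_n$ by induction on $n$, for $0 \le n \le \hat n$. The base case $n = 0$ is immediate since $\xi_0^{\d_l} = \xi_0$ and $x_0^{\d_l} = x_0$ are independent of $\d_l$ by construction. For the inductive step, I would assume $\xi_k^{\d_l} \to \xi_k$ and $x_k^{\d_l} \to x_k$ for all $0 \le k \le n$, where $n < \hat n$, and show the convergence holds at index $n+1$. Since $\hat n \le \liminf_{l\to\infty} n_{\d_l}$, for all sufficiently large $l$ we have $n < n_{\d_l}$, so $\|r_k^{\d_l}\| > \tau \d_l$ for $0 \le k \le n$ and hence all the quantities $r_k^{\d_l}, g_k^{\d_l}, \a_k^{\d_l}, \tilde\gamma_k^{\d_l}, \beta_k^{\d_l}$ for $0 \le k \le n$ are genuinely defined by the same formulae as in Algorithm \ref{alg:AHB}; the only difference from Algorithm \ref{alg:AHB0} is the presence of $y^{\d_l}$ in place of $y$ and the extra $(1+\eta)\a_{n-1}^\d \d\|r_{n-1}^\d\|$ term in the recursion for $\tilde\gamma_n^\d$.

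The core of the argument is then a chain of continuity observations, carried out in the natural order dictated by step (i)--(v) of the algorithm. First, $r_n^{\d_l} = F(x_n^{\d_l}) - y^{\d_l} \to F(x_n) - y = r_n$ using the induction hypothesis $x_n^{\d_l} \to x_n$, the continuity of $F$ on $B_{2\rho}(x_0)$ (which follows from Assumption \ref{ass1}(d)), and $y^{\d_l} \to y$. Next, $g_n^{\d_l} = L(x_n^{\d_l})^* r_n^{\d_l} \to L(x_n)^* r_n = g_n$ using the continuity of $x \mapsto L(x)$ on $B_{2\rho}(x_0)$ from Assumption \ref{ass1}(d) together with $\|L(x)\| \le L$. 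Since $m_n^{\d_l} = \xi_n^{\d_l} - \xi_{n-1}^{\d_l} \to \xi_n - \xi_{n-1} = m_n$ by the induction hypothesis, and $\d_l \to 0$, one checks that $\a_n^{\d_l} \to \a_n$ (both forms in (\ref{alphan}) are continuous in the relevant data, noting that in the second form either $r_n \ne 0$, making the ratio continuous near $(r_n, g_n)$, or $r_n = 0$ in which case $\|r_n^{\d_l}\|^2/\|g_n^{\d_l}\|^2 \to 0$ because $\|g_n^{\d_l}\| \le L\|r_n^{\d_l}\|$ forces the ratio to be bounded below by $1/L^2$... actually one must be slightly careful here and instead observe $\mu_0\|r_n^{\d_l}\|^2/\|g_n^{\d_l}\|^2 \le \mu_1$ eventually or handle it via the min directly) and $\tilde\gamma_n^{\d_l} \to \tilde\gamma_n$ (here the extra term $(1+\eta)\a_{n-1}^{\d_l}\d_l\|r_{n-1}^{\d_l}\|$ vanishes in the limit because $\a_{n-1}^{\d_l}$ and $\|r_{n-1}^{\d_l}\|$ are bounded while $\d_l \to 0$, and the remaining terms converge by induction). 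Then $\beta_n^{\d_l} \to \beta_n$ follows from the explicit formula in step (iv): the map defining $\beta_n^\d$ is continuous as a composition of $\min$, $\max$, inner products and division by $\|m_n^\d\|^2$ wherever $m_n \ne 0$, and when $m_n = 0$ one argues $\beta_n^{\d_l}$ stays bounded (by $\beta$ if $\beta < \infty$, or one uses the clipping/monotonicity more carefully) and multiplies the vanishing $m_n^{\d_l}$, so that $\beta_n^{\d_l} m_n^{\d_l} \to 0 = \beta_n m_n$ regardless. Finally, $\xi_{n+1}^{\d_l} = \xi_n^{\d_l} - \a_n^{\d_l} g_n^{\d_l} + \beta_n^{\d_l} m_n^{\d_l} \to \xi_n - \a_n g_n + \beta_n m_n = \xi_{n+1}$, and $x_{n+1}^{\d_l} = \nabla\R^*(\xi_{n+1}^{\d_l}) \to \nabla\R^*(\xi_{n+1}) = x_{n+1}$ by the Lipschitz continuity of $\nabla\R^*$ recorded in (\ref{RLip}). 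This closes the induction.

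The main obstacle, and the place requiring genuine care rather than routine bookkeeping, is the discontinuity lurking in the formulae for $\a_n^\d$ and $\beta_n^\d$ at the degenerate values $r_n = 0$ and $m_n = 0$. When $m_n = 0$, the quantity $\beta_n^{\d_l}$ need not converge to $0$ along the sequence — it need only remain bounded — so one cannot argue "$\beta_n^{\d_l} \to \beta_n = 0$" directly; instead the clean statement to prove is that $\beta_n^{\d_l} m_n^{\d_l} \to 0$, which suffices because that product is exactly what enters the update for $\xi_{n+1}^\d$. For $\beta < \infty$ (the hypothesis of this lemma) boundedness of $\beta_n^{\d_l}$ is automatic from the clipping to $[0,\beta]$, which is precisely why the lemma restricts to finite $\beta$. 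A parallel remark applies to $\a_n^\d$ at $r_n = 0$: the min with $\mu_1$ keeps $\a_n^{\d_l}$ bounded, and one shows $\a_n^{\d_l} \to \a_n$ by splitting into the cases $r_n \ne 0$ (ratio continuous) and $r_n = 0$ (where $\a_n = \min\{0,\mu_1\}$ or $\mu_0/L^2$ according to which rule is used, and $\|r_n^{\d_l}\| \to 0$ drives the first rule's value to $0$). Once these degenerate cases are dispatched, everything else is continuity of elementary operations plus the induction hypothesis.
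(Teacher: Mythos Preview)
Your overall strategy---induction on $n$, tracking continuity of each ingredient in steps (i)--(v), and isolating the degenerate cases $m_n = 0$ and $r_n = 0$---matches the paper's approach, and your handling of the $m_n = 0$ case (pass to the product $\beta_n^{\d_l} m_n^{\d_l}$ and use $\beta_n^{\d_l} \le \beta < \infty$) is correct. But your treatment of the $r_n = 0$ case for the adaptive step-size contains a genuine error: you assert that $\|r_n^{\d_l}\| \to 0$ ``drives the first rule's value to $0$'', i.e.\ $\a_n^{\d_l} \to 0 = \a_n$. This is false. Since $\|g_n^{\d_l}\| \le L \|r_n^{\d_l}\|$, the ratio $\|r_n^{\d_l}\|^2/\|g_n^{\d_l}\|^2$ is bounded \emph{below} by $1/L^2$ (as you yourself noted a few lines earlier), so $\a_n^{\d_l} \ge \min\{\mu_0/L^2, \mu_1\} > 0$ for every $l$ with $n < n_{\d_l}$, and $\a_n^{\d_l}$ cannot converge to $0$.

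The fix is precisely the move you made for $\beta_n$: do not attempt to prove $\a_n^{\d_l} \to \a_n$, but instead track the products $\a_n^{\d_l} r_n^{\d_l} \to \a_n r_n$ and $\a_n^{\d_l} g_n^{\d_l} \to \a_n g_n$, which follow from the uniform bound $\a_n^{\d_l} \le \mu_1$ when $r_n = 0$, and from honest continuity of the formula (using $g_n \ne 0$, shown via Assumption~\ref{ass1}(d)) when $r_n \ne 0$. These products are all that enter the updates for $\xi_{n+1}$ and $\tilde\gamma_{n+1}$. The paper also organizes the induction more cleanly by introducing $n_* := \inf\{n \ge 1: m_n = 0\}$: for $n < n_*$ one has $m_n \ne 0$ and can track $\tilde\gamma_n^{\d_l} \to \tilde\gamma_n$ and $\beta_n^{\d_l} \to \beta_n$ directly; for $n \ge n_*$ one invokes Lemma~\ref{AHB.lem7} to get $r_k = 0$ and $m_{k+1} = 0$ for all $k \ge n_*-1$, after which only $\xi_n^{\d_l} \to \xi_n$ and $x_n^{\d_l} \to x_n$ need be verified via boundedness of $\a_k^{\d_l}$ and $\beta_k^{\d_l}$. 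This split avoids having to propagate $\tilde\gamma_n^{\d_l} \to \tilde\gamma_n$ through the degenerate regime, where the recursion for $\tilde\gamma$ involves $\beta_{n-1}^{\d_l}\tilde\gamma_{n-1}^{\d_l}$ with $\beta_{n-1}^{\d_l}$ not converging---a difficulty your ad-hoc scheme does not address.
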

		
\begin{proof}
Since $\hat n \le \liminf_{l\to \infty} n_{\d_l}$, we can conclude that $n_{\d_l} \ge \hat n$ for 
large $l$ and hence $\xi_n^{\d_l}, x_n^{\d_l}$ are well-defined for all $0\le n \le \hat n$. Let 
\begin{align*}
n_* := \inf\{n: n\in {\mathbb N} \mbox{ and } \xi_n = \xi_{n-1} \},
\end{align*}
where ${\mathbb N}:=\{1, 2, \cdots\}$ denotes the set of natural numbers. Then $1 \le n_* \le \infty$. 
By the definition of $n_*$ we have $m_n \ne 0$ for $1\le n < n_*$ and by using Lemma \ref{AHB.lem7}
we also have $m_n =0$ for $n_* \le n <\infty$. 

We first use an induction argument to show that 
\begin{align}\label{AHB.39}
\xi_n^{\d_l} \to \xi_n, \quad x_n^{\d_l} \to x_n, \quad \a_n^{\d_l} r_n^{\d_l} \to \a_n r_n, \quad 
\beta_n^{\d_l} \to \beta_n, \quad \tilde \gamma_n^{\d_l} \to \tilde \gamma_n
\end{align}
as $l \to \infty$ for every $0\le n < \min\{n_*, \hat n\}$. Noting that $\xi_0^{\d_l} = \xi_0$, 
$x_0^{\d_l} = x_0$ and $\tilde \gamma_0^{\d_l} = \tilde \gamma_0 = 0$, and, as $m_0^{\d_l} = m_0 =0$, 
we have $\beta_0^{\d_l} = \beta_0 = 0$. Furthermore, noting that $r_0^{\d_l} = r_0$, we also have 
$\a_0^{\d_l} r_0^{\d_l} = \a_0 r_0$ no matter whether $r_0 = 0$ or not. Consequently (\ref{AHB.39}) 
holds for $n = 0$. Now we assume that (\ref{AHB.39}) holds for $0\le n <k$ for some integer 
$1\le k <\min\{n_*, \hat n\}$. By the induction hypothesis, the definition of $\xi_k^{\d_l}$, 
$x_k^{\d_l}$, and the continuity of $F$, $L$ and $\nabla \R^*$, we can easily derive that 
\begin{align*}
\xi_k^{\d_l} & = \xi_{k-1}^{\d_l} -\a_{k-1}^{\d_l} L(x_{k-1}^{\d_l})^* r_{k-1}^{\d_l} + \beta_{k-1}^{\d_l} m_{k-1}^{\d_l} \\
& \to \xi_{k-1} - \a_{k-1} L(x_{k-1})^* r_{k-1} + \beta_{k-1} m_{k-1} \\
& = \xi_k, \\
x_k^{\d_l} & = \nabla \R^*(\xi_k^{\d_l}) \to \nabla \R^*(\xi_k) = x_k
\end{align*}
and 
\begin{align*}
\tilde \gamma_k^{\d_l} & = \l m_k^{\d_l}, x_k^{\d_l} - x_{k-1}^{\d_l}\r - (1-\eta) \a_{k-1}^{\d_l} \|r_{k-1}^{\d_l}\|^2 
+ (1+\eta)\d_l \a_{k-1}^{\d_l} \|r_{k-1}^{\d_l}\| + \beta_{k-1}^{\d_l} \tilde \gamma_{k-1}^{\d_l} \\
& \to \l m_k, x_k - x_{k-1}\r - (1-\eta) \a_{k-1} \|r_{k-1}\|^2 
+ \beta_{k-1} \tilde \gamma_{k-1} = \tilde \gamma_k
\end{align*}
as $l \to \infty$. We now show $\a_k^{\d_l} r_k^{\d_l} \to \a_k r_k$ as $l \to \infty$. When $r_k = 0$, 
by using $0\le \a_k^{\d_l} \le \mu_1$ we have 
$$
\|\a_k^{\d_l} r_k^{\d_l} - \a_k r_k\| = \|\a_k^{\d_l} r_k^{\d_l}\| 
\le \mu_1 \|r_k^{\d_l} \| \to \mu_1 \|r_k\| =0 \quad 
\mbox{ as } l \to \infty.
$$
When $r_k \ne 0$, for $g_k := L(x_k)^*r_k$ we may use Assumption \ref{ass1} (d) to obtain  
\begin{align*}
\l g_k, x_k - x^\dag\r & = \l r_k, L(x_k)(x_k - x^\dag)\r \\
& = \|r_k\|^2 + \l r_k, y - F(x_k) - L(x_k)(x^\dag - x_k)\r \\
& \ge \|r_k\|^2 - \|r_k\| \|y - F(x_k) - L(x_k)(x^\dag - x_k)\|\\
& \ge (1-\eta) \|r_k\|^2 > 0
\end{align*}
which implies $g_k \ne 0$. Thus, by using $x_k^{\d_l} \to x_k$ and the continuity of $F$ 
and $L$, we have $\|r_k^{\d_l}\| > \tau \d_l$ and $g_k^{\d_l} \ne 0$ for sufficiently 
large $l$. Consequently, by the definition of $\a_k^{\d_l}$ and $\a_k$ we can conclude that 
$\a_k^{\d_l} \to \a_k$ as $l \to \infty$. Therefore 
$$
\|\a_k^{\d_l} r_k^{\d_l} - \a_k r_k\| \le \a_k^{\d_l} \|r_k^{\d_l} - r_k\| + |\a_k^{\d_l} - \a_k|\|r_k\|
\to 0 \quad \mbox{ as } l \to \infty.
$$
Recall that $m_k\ne 0$, we thus have $m_k^{\d_l} \ne 0$ for large $l$. Therefore, by using the definition of 
$\beta_k^{\d_l}$ and $\beta_k$, the above established facts, and  the induction hypothesis we can obtain 
\begin{align*}
\beta_k^{\d_l} 
& = \min\left\{\max\left\{0, \frac{\a_k^{\d_l} \l g_k^{\d_l}, m_k^{\d_l}\r - 2\sigma\tilde \gamma_k^{\d_l}}
{\|m_k^{\d_l}\|^2}\right\}, \beta\right\} \\
& \to \min\left\{\max\left\{0, \frac{\a_k \l g_k, m_k\r - 2\sigma\tilde \gamma_k}{\|m_k\|^2}\right\}, \beta\right\} 
=\beta_k
\end{align*}
as $l \to \infty$, where we used $\a_k^{\d_l} g_k^{\d_l} \to \a_k g_k$ as $l\to \infty$ which follows 
easily from the established facts. By the induction principle, we thus obtain (\ref{AHB.39}) for 
$0\le n < \min\{n_*, \hat n\}$. 
			
When $\hat n \ge n_*$, we next use an induction argument to show that 
\begin{align}\label{AHB.40}
\xi_n^{\d_l} \to \xi_n \quad \mbox{ and } \quad x_n^{\d_l} \to x_n \quad \mbox{ as } l \to \infty 
\end{align}
for $n_* \le n \le \hat n$. Recall $m_{n_*} =0$, we may use Lemma \ref{AHB.lem7} to conclude that $r_k =0$ 
and $m_{k+1} = 0$ for all $k \ge n_*-1$. We first show (\ref{AHB.40}) for $n = n_*$. Note that  
$$
\xi_{n_*} = \xi_{n_*-1} + \beta_{n_*-1} m_{n_*-1}.
$$
Therefore, by the definition of $\xi_{n_*}^{\d_l}$, (\ref{AHB.39}), $0\le \a_{n_*-1}^{\d_l} \le \mu_1$ 
we have 
\begin{align*}
\|\xi_{n_*}^{\d_l} - \xi_{n_*}\| 
& \le \|\xi_{n_*-1}^{\d_l} - \xi_{n_*-1}\| + \mu_1 \| L(x_{n_*-1}^{\d_l})^* r_{n_*-1}^{\d_l}\| \\
& \quad \, + \|\beta_{n_*-1}^{\d_l} m_{n_*-1}^{\d_l} - \beta_{n_*-1} m_{n_*-1}\| \\
& \to \mu_1 \|L(x_{n_*-1}) r_{n_*-1}\| = 0
\end{align*}
and consequently, by the continuity of $\nabla \R^*$, we have $x_{n_*}^{\d_l} \to x_{n_*}$ as 
$l \to \infty$. Now assume that (\ref{AHB.40}) holds for $n_*\le n \le k$ for some $n_*\le k < \hat n$. 
By using $r_k =0$ and $m_k=0$ we have $\xi_{k+1} = \xi_k$. Thus, by using $0\le \a_k^{\d_l} \le \mu_1$ 
and $0\le \beta_k^{\d_l} \le \beta$, we have 
\begin{align*}
\|\xi_{k+1}^{\d_l} - \xi_{k+1}\| 
& = \|\xi_k^{\d_l} - \xi_k - \a_k^{\d_l} L(x_k^{\d_l})^* r_k^{\d_l} + \beta_k^{\d_l} m_k^{\d_l}\| \\
& \le \|\xi_k^{\d_l} - \xi_k\| + \mu_1 \|L(x_k^{\d_l})^* r_k^{\d_l}\| + \beta \|m_k^{\d_l}\| \\
& \to \mu_1 \|L(x_k)^* r_k\| + \beta \|m_k\| = 0
\end{align*}
as $\d \to 0$. This shows that (\ref{AHB.40}) is also true for $n = k+1$. 

Putting the facts established in (\ref{AHB.39}) and (\ref{AHB.40}) together, the proof is 
therefore complete. 
\end{proof}

Finally we are ready to show the main strong convergence result on Algorithm \ref{alg:AHB} for 
solving (\ref{AHB.1}) using noisy data. 

\begin{theorem}\label{AHB.thm4}
Let Assumption \ref{ass0} and Assumption \ref{ass1} hold and consider Algorithm \ref{alg:AHB}, 
where $0\le \beta < 1$, and $\tau>1$ and $\mu_0 > 0$ are chosen such that (\ref{mutau}) holds. 
Let $n_\d$ be the output integer. Then there exists a solution $x^*$ of (\ref{AHB.1}) in 
$B_{2\rho}(x_0) \cap \emph{dom}(\R)$ such that 
$$
\lim_{\d\to 0} D_\R^{\xi_{n_\d}^\d} (x^*, x_{n_\d}^\d) = 0 \quad \mbox{ and } \quad 
\lim_{\d \to 0}\|x_{n_\d}^\d - x^*\|=0.
$$
If, in addition, $\emph{Ker}(L(x^\dag))\subset\emph{Ker}(L(x))$ for all $x\in B_{2\rho}(x_0)$, 
then $x^* = x^\dag$.
\end{theorem}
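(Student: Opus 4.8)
The plan is to obtain the regularization property of the noisy scheme, Algorithm \ref{alg:AHB}, from the behaviour of its exact-data counterpart, Algorithm \ref{alg:AHB0}, by gluing together three ingredients already established: the strong convergence of the exact-data iterates (Theorem \ref{AHB.lem5}), the stability of finitely many iterates as $\d\to0$ (Lemma \ref{AHB.lem6}), and the monotonicity of the Bregman distances along the noisy iteration (Lemma \ref{AHB.lem2}). Throughout I assume $0<\beta<1$; the boundary case $\beta=0$ is just the Landweber-type method (\ref{Land}), whose regularization property is already known, and the argument below carries over verbatim. First I would fix the iterates $\{x_n\}$, $\{\xi_n\}$ produced by Algorithm \ref{alg:AHB0} with the exact data $y$; since (\ref{mutau}) forces $0<\mu_0<4\sigma(1-\eta)$, Theorem \ref{AHB.lem5} supplies a solution $x^*\in B_{2\rho}(x_0)\cap\mbox{dom}(\R)$ with $D_\R^{\xi_n}(x^*,x_n)\to0$ and $x_n\to x^*$. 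I claim this $x^*$ already does the job: it suffices to prove $D_\R^{\xi_{n_\d}^\d}(x^*,x_{n_\d}^\d)\to0$ as $\d\to0$, because then (\ref{sc}) gives $\sigma\|x_{n_\d}^\d-x^*\|^2\le D_\R^{\xi_{n_\d}^\d}(x^*,x_{n_\d}^\d)\to0$. Since the target value is the fixed number $0$, it is enough to show that an arbitrary sequence $\d_l\to0$ admits a subsequence along which $D_\R^{\xi_{n_{\d_l}}^{\d_l}}(x^*,x_{n_{\d_l}}^{\d_l})\to0$; passing to a subsequence, the output integers $n_{\d_l}$ are either bounded (hence constant on a further subsequence) or tend to $\infty$, and I would handle these two situations separately.

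In the bounded case, along a further subsequence $n_{\d_l}\equiv n$ with $n$ a fixed finite integer, so that $\liminf_l n_{\d_l}=n$ and Lemma \ref{AHB.lem6} applies with $\hat n=n$, giving $\xi_n^{\d_l}\to\xi_n$ and $x_n^{\d_l}\to x_n$. Combining the stopping inequality $\|F(x_n^{\d_l})-y^{\d_l}\|\le\tau\d_l$ with $\|y^{\d_l}-y\|\le\d_l$ and the continuity of $F$ on $B_{2\rho}(x_0)$ forces $F(x_n)=y$, so $x_n$ solves (\ref{AHB.1}); Lemma \ref{AHB.lem7}(i) then gives $x_m=x_n$ for all $m\ge n$, whence $x^*=\lim_m x_m=x_n$. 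Now using $\xi_n^{\d_l}\in\p\R(x_n^{\d_l})$, the identity $D_\R^{\xi}(z,x)=\R(z)+\R^*(\xi)-\l\xi,z\r$, and the continuity of $\R^*$, I get $D_\R^{\xi_n^{\d_l}}(x^*,x_n^{\d_l})=\R(x^*)+\R^*(\xi_n^{\d_l})-\l\xi_n^{\d_l},x^*\r\to\R(x_n)+\R^*(\xi_n)-\l\xi_n,x_n\r=D_\R^{\xi_n}(x_n,x_n)=0$, which is what was wanted.

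In the unbounded case, fix an arbitrary integer $N$. Along the subsequence one has $n_{\d_l}\ge N$ for all large $l$, so the monotonicity statement in Lemma \ref{AHB.lem2}, applied with $\hat x=x^*$, yields $D_\R^{\xi_{n_{\d_l}}^{\d_l}}(x^*,x_{n_{\d_l}}^{\d_l})\le D_\R^{\xi_N^{\d_l}}(x^*,x_N^{\d_l})$. Letting $l\to\infty$ and invoking Lemma \ref{AHB.lem6} (with $\hat n=N\le\liminf_l n_{\d_l}=\infty$) together with the same $\R^*$-continuity computation as above, I obtain $\limsup_{l\to\infty}D_\R^{\xi_{n_{\d_l}}^{\d_l}}(x^*,x_{n_{\d_l}}^{\d_l})\le D_\R^{\xi_N}(x^*,x_N)$. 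Since $N$ was arbitrary and $D_\R^{\xi_N}(x^*,x_N)\to0$ as $N\to\infty$ by Theorem \ref{AHB.lem5}, the left-hand side equals $0$. With both cases settled we conclude $D_\R^{\xi_{n_\d}^\d}(x^*,x_{n_\d}^\d)\to0$ and, via (\ref{sc}), $\|x_{n_\d}^\d-x^*\|\to0$. Finally, if $\mbox{Ker}(L(x^\dag))\subset\mbox{Ker}(L(x))$ for all $x\in B_{2\rho}(x_0)$, then the concluding clause of Theorem \ref{AHB.lem5} already identifies the exact-data limit as $x^*=x^\dag$, which finishes the proof.

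Since the heavy analytical work has been front-loaded into Lemma \ref{AHB.lem6} and Theorem \ref{AHB.lem5}, what remains is mostly organisational. The one genuinely delicate point I expect is the bounded-$n_\d$ case: one must recognise that the exact-data limit $x^*$ necessarily coincides with the finite iterate $x_n$ --- this is precisely where the ``absorbing'' property of Lemma \ref{AHB.lem7}(i) is needed --- and one must pass Bregman distances to the limit, which requires the continuity of $\R^*$ rather than of $\R$ itself (the latter need not be continuous). A secondary point to verify is that each subsequence extraction remains compatible with the hypothesis $\hat n\le\liminf_l n_{\d_l}$ of Lemma \ref{AHB.lem6}, which it does in both cases.
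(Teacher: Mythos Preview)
Your proof is correct and follows the same overall architecture as the paper's: fix the exact-data limit $x^*$ from Theorem \ref{AHB.lem5}, split along subsequences into the cases $n_{\d_l}$ bounded and $n_{\d_l}\to\infty$, and use Lemma \ref{AHB.lem7}(i), Lemma \ref{AHB.lem6}, and the monotonicity from Lemma \ref{AHB.lem2} exactly as the paper does.

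The one noteworthy difference is how you pass Bregman distances to the limit. The paper writes $D_\R^{\xi_n^{\d_l}}(x^*,x_n^{\d_l})=\R(x^*)-\R(x_n^{\d_l})-\l\xi_n^{\d_l},x^*-x_n^{\d_l}\r$ and invokes the lower semi-continuity of $\R$ to bound the $\limsup$; you instead rewrite $D_\R^{\xi_n^{\d_l}}(x^*,x_n^{\d_l})=\R(x^*)+\R^*(\xi_n^{\d_l})-\l\xi_n^{\d_l},x^*\r$ and use the (genuine) continuity of $\R^*$, which holds under Assumption \ref{ass0} since $\nabla\R^*$ is Lipschitz. Your route gives an actual limit rather than merely a $\limsup$ bound and is arguably cleaner, but both arguments are short and lead to the same conclusion.
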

		
\begin{proof}
Let $x^*$ be the solution of (\ref{AHB.1}) in $B_{2\rho}(x_0) \cap \mbox{dom}(\R)$ determined 
in Theorem \ref{AHB.lem5} such that $D_\R^{\xi_n}(x^*, x_n) \to 0$ as $n\to \infty$, where 
$\{x_n\}$ denotes the sequence defined by the counterpart of Algorithm \ref{alg:AHB} using 
the exact data. Let $\Delta_n^\d: = D_\R^{\xi_n^\d} (x^*, x_n^\d)$ for $n \ge 0$. We will show 
that $\Delta_{n_\d}^\d \to 0$ as $\d \to 0$ by considering two cases. 
			
Assume first that there is a sequence $\{y^{\d_l}\}$ of noisy data satisfying $\|y^{\d_l}-y\|\le \d_l$ 
with $\d_l \to 0$ such that $n_l:= n_{\d_l}\to \hat{n}$ as $l\to \infty$ for some finite integer 
$\hat n$. Then $n_l = \hat{n}$ for all large $l$. According to the definition of $n_l:=n_{\d_l}$ 
we have
$$
\|F(x_{\hat{n}}^{\d_l}) - y^{\d_l}\| \le \tau \d_l.
$$
By taking $l\to\infty$ and using lemma \ref{AHB.lem6}, we can obtain $F(x_{\hat{n}}) = y$. Thus, 
we may use Lemma \ref{AHB.lem7} (i) to obtain $x_n = x_{\hat n}$ for all $n \ge \hat n$. Since 
$x_n \to x^*$ as $n \to \infty$, we must have $x_{\hat n} = x^*$ and thus, by Lemma \ref{AHB.lem6} 
and the lower semi-continuity of $\R$, we can obtain
\begin{align*}
\limsup_{l\to \infty} \Delta_{n_l}^{\d_l} 
& = \limsup_{l\to \infty} \Delta_{\hat n}^{\d_l} 
= \limsup_{l\to \infty} \left(\R(x_{\hat n}) - \R(x_{\hat n}^{\d_l}) 
- \l \xi_{\hat n}^{\d_l}, x_{\hat n} - x_{\hat n}^{\d_l}\r \right) \\
& = \R(x_{\hat n}) - \liminf_{l\to \infty} \R(x_{\hat n}^{\d_l}) 
- \lim_{l\to \infty} \l \xi_{\hat n}^{\d_l}, x_{\hat n} - x_{\hat n}^{\d_l}\r \\
& \le \R(x_{\hat n}) - \R(x_{\hat n}) = 0
\end{align*}
which shows that $\Delta_{n_l}^{\d_l} \to 0$ as $l \to \infty$. 
			
Assume next that there is a sequence $\{y^{\d_l}\}$ of noisy data satisfying $\|y^{\d_l}-y\|\le \d_l$ 
with $\d_l\to 0$ such that $n_l:= n_{\d_l}\to \infty$ as $l\to \infty$. Let $n$ be any fixed integer. 
Then $n_l>n$ for large $l$. It then follows from Lemma \ref{AHB.lem2} that 
$
\Delta_{n_l}^{\d_l} \le \Delta_n^{\d_l}.
$ 
By using Lemma \ref{AHB.lem6} and the lower semi-continuity of $\R$, we thus obtain
\begin{align*}
\limsup_{l\to \infty} \Delta_{n_l}^{\d_l} 
& \le \limsup_{l\to \infty}\Delta_n^{\d_l} 
= \limsup_{l\to \infty} \left(\R(\hat x) - \R(x_n^{\d_l}) - \l \xi_n^{\d_l}, x^* - x_n^{\d_l}\r \right) \\
& \le \R(x^*) - \R(x_n) - \l \xi_n, x^* - x_n\r = D_\R^{\xi_n}(x^*, x_n). 
\end{align*}
Letting $n\to \infty$ in the above equation gives $\limsup_{l\to \infty} \Delta_{n_l}^{\d_l} \le 0$ 
which shows again $\Delta_{n_l}^{\d_l} \to 0$ as $l \to \infty$.
\end{proof}

\section{\bf Numerical results}\label{sect4}

In this section we will provide numerical simulations to test the performance of our AHB method, 
i.e. Algorithm \ref{alg:AHB}. The computational results demonstrate that our AHB method 
indeed has superior performance over the Landweber iteration in terms of both the number 
of iterations and the CPU running time. Our simulations are performed via MATLAB R2023b 
on a Dell laptop with 11th Gen Intel(R) Core(TM) i7-1185G7 @ 3.00GHz 1.80 GHz and 32GB memory.

\begin{example}\label{ex1}
{\rm 
We first test the performance of Algorithm \ref{alg:AHB} by considering the following linear 
integral equation of the first kind
\begin{align}\label{IE.1}
(F x)(s) := \int_0^1 \kappa(s, t) x(t) dt = y(s), \quad s \in [0,1], 
\end{align}
where the kernel $\kappa(s,t)$ is a continuous function defined on $[0, 1] \times [0, 1]$. 
It is easy to see that $F$ is a compact linear map from $L^2[0, 1]$ to itself. Our goal is 
to determine the minimal-norm solution of (\ref{IE.1}) by using some  
noisy data $y^\d\in L^2[0,1]$ satisfying $\|y^\d - y\|_{L^2[0,1]} \le \d$ with a given noise 
level $\d>0$. When applying Algorithm \ref{alg:AHB} to find the minimal-norm solution, we use $x_0 = 0$, 
$\R(x) = \frac{1}{2}\|x\|_{L^2[0,1]}^2$, $\tau = 1.01$, $\beta = \infty$ and 
$$
\a_n \equiv \frac{\mu_0}{\|F\|^2} \quad \mbox{ with } \mu_0 = 0.99(2-2/\tau),
$$
where $\mu_0$ is chosen to fulfill the theoretical requirement. As comparisons, we also consider
the following algorithms:

\begin{enumerate}[leftmargin = 0.8cm]
\item[$\bullet$] Landweber iteration 
$$
x_{n+1}^\d = x_n^\d - \a F^*(F x_n^\d - y^\d)
$$
with the initial guess $x_0^\d = 0$ terminated by the discrepancy principle 
\begin{align}\label{DP1}
\|F x_{n_\d}^\d - y^\d\| \le \tau \d < \|F x_n^\d - y^\d\|, \quad 0\le n < n_\d.
\end{align}
This method is convergent for $0< \a \le 2/\|A\|^2$ and $\tau > 1$; see \cite{EHN1996}. 
In our computation we use $\a = 1/\|A\|^2$ and $\tau = 1.01$. 

\item[$\bullet$] The $\nu$-method of Brakhage: This method was introduced in \cite{B1987}
to accelerate the Landweber iteration. It preassigns a number $\nu > 1/2$ and defines the 
iterates by  
$$
x_{n+1}^\d = x_n^\d - \a_n \gamma F^*(F x_n^\d - y^\d) + \beta_n (x_n^\d - x_{n-1}^\d)
$$
with $x_{-1}^\d = x_0^\d =0$ and 
\begin{align*}
\quad \a_n = 4 \frac{(2n+2\nu+1)(n+\nu)}{(n+2\nu)(2n+4\nu+1)}, \, \, \,
\beta_n = \frac{n(2n-1)(2n+2\nu+1)}{(n+2\nu)(2n+4\nu+1)(2n+2\nu-1)}.
\end{align*}
It is known that it is a regularization method for $0< \gamma < 1/\|F\|^2$ when terminated by 
the discrepancy principle (\ref{DP1}) with $\tau >1$; see \cite{B1987,EHN1996,H1991}. We 
use $\nu =3$, $\gamma = 0.99/\|F\|^2$ and $\tau = 1.01$.  

\item[$\bullet$] Nesterov acceleration: This method was proposed in \cite{N1983} to accelerate the 
gradient method for convex optimization problems. This strategy was then suggested in \cite{Jin2016} 
to accelerate the Landweber iteration for ill-posed inverse problems. The corresponding method for 
solving (\ref{IE.1}) takes the form
\begin{align*}
z_n^\d = x_n^\d + \frac{n-1}{n+\a} (x_n^\d - x_{n-1}^\d), \quad 
x_{n+1}^\d = z_n^\d - \gamma F^*(F z_n^\d - y^\d)
\end{align*}
with $x_{-1}^\d = x_0^\d =0$ and $\a\ge 2$. When the method is terminated by the discrepancy 
principle (\ref{DP1}) with $\tau >1$, the corresponding regularization property has been proved 
in \cite{K2021,N2017} for $0< \gamma < 1/\|F\|^2$. We use $\a = 3$, $\gamma = 0.99/\|F\|^2$ and 
$\tau = 1.01$.
\end{enumerate}

\begin{table}[ht]
\caption{Numerical results for Example \ref{ex1}.} \label{table1}
    \begin {center}
\begin{tabular}{lllll}
     \hline
$\d$  \qquad \quad \quad   & method \qquad \qquad   & iterations \quad \quad & time (s) \quad \quad & relative error \\ \hline
0.1     & Landweber                 & 62    & 0.0153 & 1.9024e-2 \\
        & $\nu$-method              & 15    & 0.0083 & 1.6657e-2 \\
        & Nesterov                  & 16    & 0.0091 & 1.7684e-2 \\
        & AHB                       & 20    & 0.0087 & 1.7774e-2 \\ %\hline 
0.01    & Landweber                 & 190   & 0.0359 & 5.0988e-3 \\
        & $\nu$-method              & 30    & 0.0102 & 5.1081e-3 \\
        & Nesterov                  & 41    & 0.0157 & 4.1816e-3 \\
        & AHB                       & 30    & 0.0108 & 4.6982e-3 \\ %\hline 
0.001   & Landweber                 & 1256  & 0.1934 & 1.8305e-3 \\
        & $\nu$-method              & 82    & 0.0189 & 1.8587e-3 \\
        & Nesterov                  & 102   & 0.0285 & 1.7776e-3 \\
        & AHB                       & 80    & 0.0187 & 1.7220e-3 \\ %\hline 
0.0001  & Landweber                 & 8144  & 1.2922 & 6.2937e-4 \\
        & $\nu$-method               & 215   & 0.0414 & 6.4159e-4 \\
        & Nesterov                  & 324   & 0.0831 & 5.1961e-4 \\
        & AHB                       & 268   & 0.0513 & 6.1130e-4 \\ %\hline 
0.00001 & Landweber                 & 55145 & 8.2924 & 1.9023e-4 \\
        & $\nu$-method              & 565   & 0.0918 & 1.9305e-4 \\
        & Nesterov                  & 817   & 0.2003 & 1.7054e-4 \\
        & AHB                       & 896   & 0.1520 & 1.8902e-4 \\ \hline 
\end{tabular}\\[5mm]
\end{center}
\end{table}%%

In our numerical computation, we consider the equation (\ref{IE.1}) with the kernel 
\begin{align*}
\kappa(s,t) = \left\{\begin{array}{lll}
40 s(1-t) & \mbox{ if } s\le t, \\
40t(1-s) & \mbox{ if } t \le s
\end{array} \right.
\end{align*}
and assume that the sought solution is $x^\dag(t) = 4t(1-t) + \sin(2\pi t)$. Let 
$y := A x^\dag$ be the exact data. We add random noise to $y$ to produce a noisy
data $y^\d$ satisfying $\|y^\d - y\|_{L^2[0,1]} = \d$ for various noise levels 
$\d>0$. In our implementation, all integrals over $[0,1]$ are approximated by 
the trapezoidal rule based on the $1000$ nodes partitioning $[0,1]$ into 
subintervals of equal length.

For this model problem, we execute the Landweber iteration, the $\nu$-method, the Nesterov acceleration 
and our AHB method with the above setup using noisy data with various noise levels and the computational 
results are reported in Table \ref{table1} which includes the required number of iterations and the 
consumed CPU time together with the corresponding relative errors. These results indicate that 
all the four methods can produce comparable approximate solutions in terms of accuracy. However, 
the $\nu$-method, the Nesterov acceleration and our AHB method clearly demonstrate superior performance 
over the Landweber method by significantly reducing the number of iterations and the CPU running time. 
In order to visualize how the iterates approach the sought solution, in Figure \ref{ex1_relerr} we 
plot the relative error $\|x_n^\d - x^\dag\|_{L^2}/\|x^\dag\|_{L^2}$ versus $n$ for these four methods: 
the left figure plots the relative error, using noisy data with $\d = 0.001$, until 
the iteration is terminated by the discrepancy principle; the right figure plots the relative error 
when all computations are performed using the exact data. These plots further indicate the acceleration 
effect of the $\nu$-methd, the Nesterov acceleration and our AHB method. Moreover, when using the 
exact data, our AHB method can outperform both the $\nu$-method and the Nesterov acceleration. 

\begin{figure}[ht]
\centering
\includegraphics[width = 0.48\textwidth]{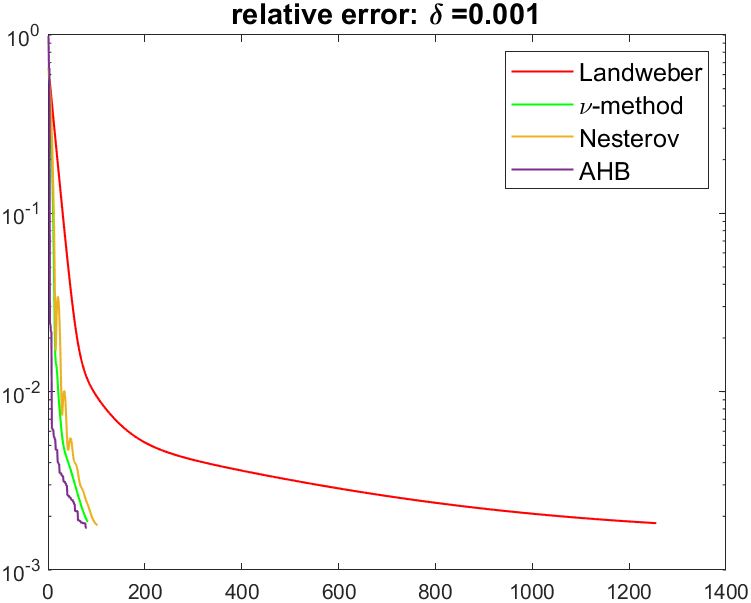}
\includegraphics[width = 0.48\textwidth]{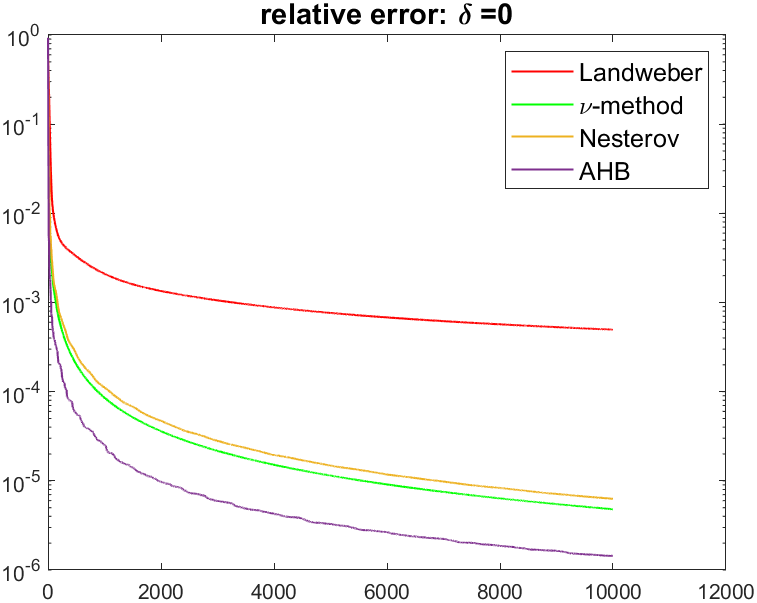}
\caption{Relative error versus the number of iterations for Example \ref{ex1}. }\label{ex1_relerr}
			\end{figure}

When using noisy data in computation, among the three fast methods, the $\nu$-method usually 
requires least number of iterations. However, it is totally unclear how to extend this method 
to solve (\ref{Rmin}) with nonlinear $F$ or non-quadratic $\R$. The Nesterov acceleration 
in general takes longer time than the other two methods due to the fact that, at each iteration 
step, it needs to calculate $A z_n^\d - y^\d$ to update $x_{n+1}^\d$ and $A x_n^\d - y^\d$ to 
check the discrepancy principle as required by the theory (\cite{K2021}) and thus the computational 
complexity per iteration increases. Although we have formulated a possible extension (\ref{NAG}) 
of Nesterov acceleration to solve (\ref{Rmin}) with general $F$ and $\R$ and numerical simulations 
demonstrate the striking performance, a solid theoretical justification however is still 
unavailable. Note that, in our AHB method, we used a very small step size, compared with the ones 
used by other three methods. With an adaptive choice of the momentum coefficient, it still 
achieves an excellent acceleration effect on Landweber iteration. Moreover, our theory provides 
the convergence guarantee on using our AHB method to solve (\ref{Rmin}) with general $F$ and $\R$. 
In the following numerical examples, we will focus on testing the acceleration effect of our AHB 
method over the Landweber iteration. 

}
\end{example}

\begin{example}[Computed tomography]
{\rm 
In this example, we consider applying Algorithm \ref{alg:AHB} to computed tomography which consists 
in determining the density of cross sections of a human body by measuring the attenuation of X-rays 
as they propagate through the biological tissues (\cite{N2001}).

Assume the image is supported on a rectangular domain in $\mathbb{R}^2$
and is discretized on a $I \times J$ pixel grid. We will identify any image 
of size $I\times J$ by a long vector in ${\mathbb R}^N$ with $N = I\times J$ 
by stacking all its columns. We consider the standard 2D fan-beam tomography
using $n_\theta$ projection angles evenly distributed between $1$ and $360$ degrees 
with $p$ lines of X-rays emitted through each angle. We use the function ``\texttt{fanbeamtomo}” 
in the MATLAB package AIR TOOLS \cite{HS2012} to discretize the problem, leading 
to an ill-conditioned linear algebraic system $F x = y$, where $F \in \mathbb{R}^{M\times N}$ 
with $M = n_\theta p$ and $N = I\times J$.

\begin{table}[ht]
\caption{Numerical results for computed tomography.} \label{table2}
    \begin {center}
\begin{tabular}{lllll}
     \hline
$\d_{rel} \quad $ & method \qquad \qquad \quad    & iterations \qquad & time (s) \quad \quad & relative error \\ \hline
0.05     & Landweber             & 101    & 9.9940  & 1.8538e-01 \\         
         & AHB: $\beta = 0.99$   & 63     & 5.5243  & 1.9299e-01 \\
         & AHB: $\beta = \infty$ & 48     & 4.2141  & 1.9060e-01 \\ %\hline 
0.01     & Landweber             & 444    & 44.0053 & 5.9528e-02 \\
         & AHB: $\beta = 0.99$   & 230    & 19.7274 & 5.9464e-02 \\
         & AHB: $\beta = \infty$ & 238    & 20.2110 & 5.9760e-02 \\ %\hline 
0.005    & Landweber             & 778    & 78.4099 & 3.1650e-02 \\
         & AHB: $\beta = 0.99$   & 335    & 27.8164 & 3.1561e-02 \\
         & AHB: $\beta = \infty$ & 374    & 31.1740 & 3.1859e-02 \\ %\hline 
0.001    & Landweber             & 3254   & 366.628 & 6.2999e-03 \\
         & AHB: $\beta = 0.99$   & 1398   & 124.763 & 5.7077e-03 \\
         & AHB: $\beta = \infty$ & 1495   & 132.288 & 5.9425e-03 \\ \hline 
\end{tabular}\\[5mm]
\end{center}
\end{table}%%

In our numerical simulations, we consider the modified Shepp-Logan phantom of size $256\times 256$ 
generated by MATLAB, we also use $n_\theta = 60$ projection angles with $p = 367$ lines of X-rays per 
projection. Correspondingly $F$ has the size $M\times N$ with $M = 22020$ and $N = 65536$. Instead of 
the exact data $y$, we add Gaussian noise on $y$ to generate a noisy data $y^\d$ with relative noise 
level $\d_{rel} = \|y^\d - y\|_2/\|y\|_2$ so that the noise level is $\d = \d_{rel} \|y\|_2$. In order 
to capture the feature of the sought image, we take 
$$
\R(x) = \frac{1}{2\kappa} \|x\|_F^2 + |x|_{TV}, \quad \forall x\in {\mathbb R}^{I\times J}
$$ 
with $\kappa = 1$, where $\|x\|_F$ is the Frobenius norm and $|x|_{TV}$ denotes the isotropic total variation of $x$, i.e. $|x|_{TV} = f(\nabla x)$ with $\nabla$ being the discrete gradient operator
defined by $\nabla x = (\nabla_1 x,\nabla_2x)$ for $x = (x_{i,j}) \in {\mathbb R}^{I\times J}$, 
where 
\begin{align*}
(\nabla_1 x)_{i,j}&=
\left\{
  \begin{array}{ll}
   x_{i+1,j}-x_{i,j}, &  i = 1,\cdots,I-1; \, j = 1,\cdots,J\\
    x_{1,j}-x_{I,j}, &  i = I; \, j = 1,\cdots,J ;\\
  \end{array}
\right.\\
(\nabla_2 x)_{i,j}&=
\left\{
  \begin{array}{ll}
    x_{i,j+1}-x_{i,j}, & i = 1,\cdots,I; \, j = 1,\cdots,J-1,\\
    x_{i,1}-x_{i,J}, & i =1,\cdots, I-1; \, j = J
  \end{array}
\right.
\end{align*}
and
\begin{align*}
f(u,v) 
= \sum_{i=1}^{I}\sum_{j=1}^J \sqrt{u_{i,j}^2+v_{i,j}^2}, \quad 
\forall u =(u_{i,j}), v=(v_{i,j}) \in {\mathbb R}^{I\times J}.
\end{align*}
Clearly $\R$ satisfies Assumption \ref{ass0} with $\sigma = 1/(2\kappa)$. When using 
Algorithm \ref{alg:AHB} to reconstruct the image, we use the initial guess $\xi_0 = 0$ and the step-size 
$$
\a_n^\d = \min\left\{\frac{\mu_0\|F x_n^\d - y^\d\|^2}{\|F^*(F x_n^\d - y^\d)\|^2}, \mu_1\right\}
$$
Our convergence theory requires $\tau > 1$, $0 <\mu_0 < (2 - 2/\tau)/\kappa$ and $\mu_1 >0$. Therefore 
we take $\tau = 1.05$, $\mu_0 = 0.99 (2-2/\tau)/\kappa$ and $\mu_1 = 100$. In order to determine the 
momentum coefficient $\beta_n^\d$, we consider two values of $\beta$: $\beta = 0.99$ and $\beta = \infty$. 
As comparison, we also carry out the computation by the Landweber method (\ref{Land}) using the same step 
size $\a_n^\d$ and the regularization function $\R$. During the computation, updating $x_n^\d$ from 
$\xi_n^\d$ requires to solving the total variation denoising problem
$$
x_n^\d = \arg\min_{x\in {\mathbb R}^{I\times J}} 
\left\{\frac{1}{2\kappa} \|x - \kappa \xi_n^\d\|_F^2 + |x|_{TV} \right\}
$$
which is solved approximately by the primal-dual hybrid gradient (PDHG) method (\cite{BR2012,Jin2016,ZC2008}) 
after $70$ iterations.

\begin{figure}[ht]
\centering
\includegraphics[width = 0.24\textwidth]{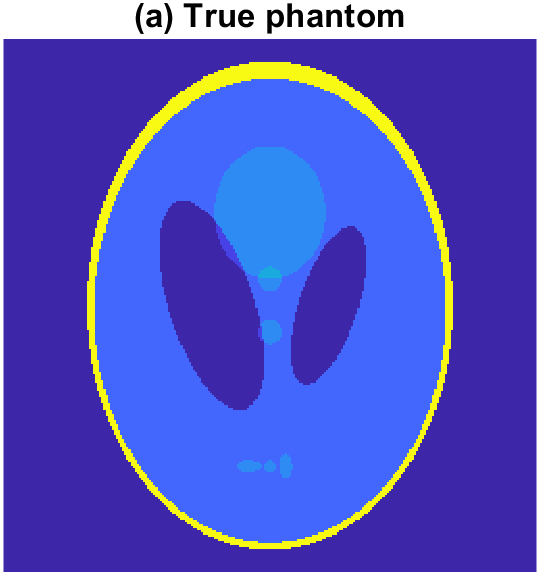}
\includegraphics[width = 0.24\textwidth]{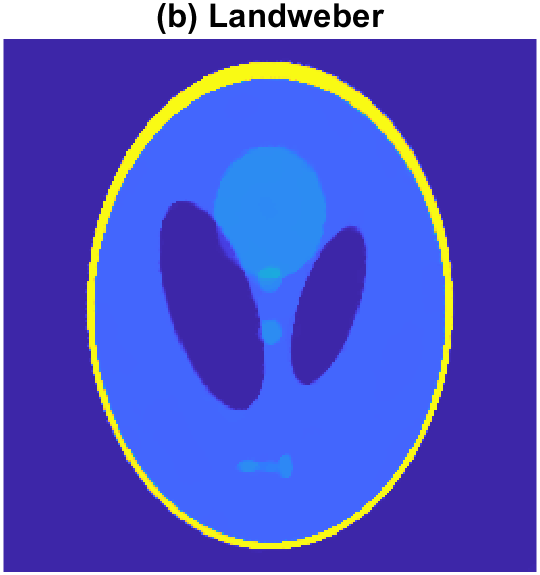}
\includegraphics[width = 0.24\textwidth]{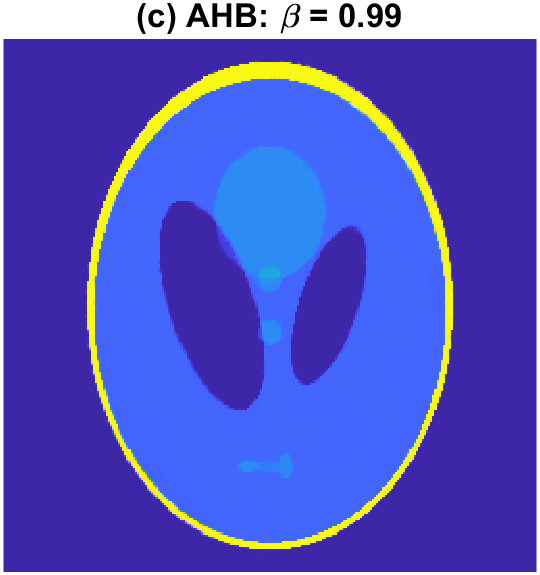}
\includegraphics[width = 0.24\textwidth]{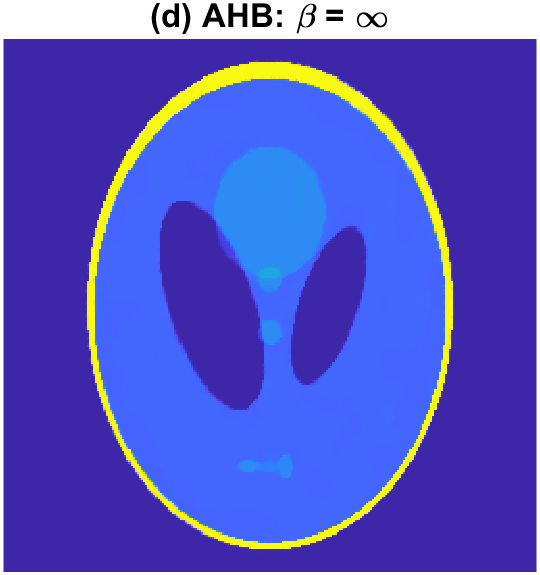}
\caption{The reconstruction results for computed tomography. (a) True phantom; (b) Landweber; 
(c) AHB with $\beta = 0.99$; (d) AHB with $\beta = \infty$.}\label{ex2_reconstruction}
\end{figure}

The computational results by AHB and Landweber method are reported in Table \ref{table2}, including the 
number of iterations $n_\d$, the CPU running time and the relative errors $\|x_{n_\d}^\d-
x^\dag\|_2/\|x^\dag\|_2$, using noisy data with various relative noise level $\d_{rel}>0$. Table 
\ref{table2} shows that AHB with both $\beta = 0.99$ and $\beta = \infty$ leads to a considerable decrease 
in the number of iterations and the amount of computational time, which demonstrates the striking 
acceleration effect of our AHB method. 

\begin{figure}[ht]
\centering
\includegraphics[width = 0.7\textwidth]{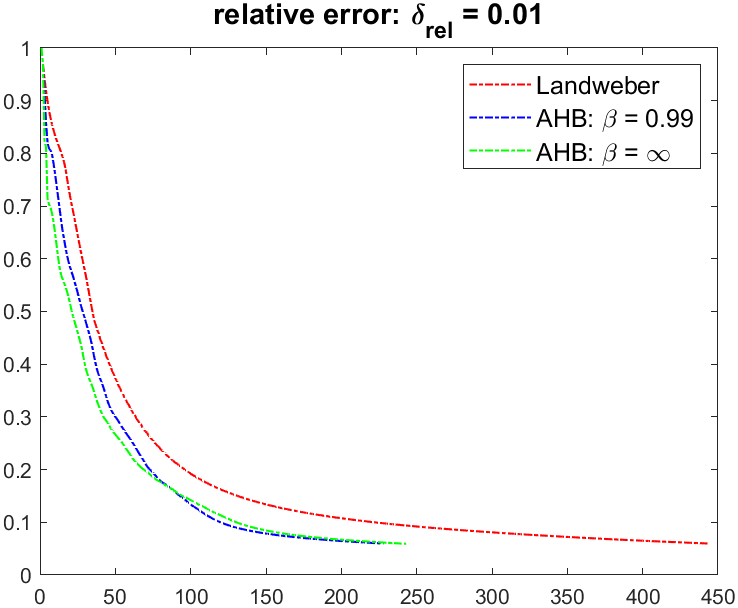}
\caption{Computed tomography: relative errors versus the number of iterations.}\label{ex2_relerr}
\end{figure}

In order to visualize the reconstruction accuracy of our AHB method, we plot in Figure 
\ref{ex2_reconstruction} the true image, the reconstruction result by Landweber method and the AHB methods 
with $\beta = 0.99$ and $\infty$ using noisy data with relative noise level $\d_{rel} = 0.01$. In 
Figure \ref{ex2_relerr} we also plot the curves of the relative errors $\|x_n^\d-x^\dag\|_2/\|x^\dag\|_2$ 
versus $n$ for Landweber and our AHB methods. These plots further demonstrate that, compared with 
Landweber iteration, our AHB method can produce comparable reconstruction results with significantly 
less number of iterations. 
}
\end{example}

\begin{example}[Elliptic parameter identification]
 
{\rm In this example we consider the identification of the parameter $c$ in the elliptic boundary value problem
\begin{align}\label{PDE}
-\triangle u + cu = f \hbox{ in } \Omega, \qquad 
u=g  \hbox{ on } \partial \Omega 
\end{align}
from an $L^2(\Omega)$-measurement of the state $u$, where $\Omega\subset\mathbb{R}^d$ with $d\leq 3$ is
a bounded domain with Lipschitz boundary $\p \Omega$, $f\in H^{-1}(\Omega)$ and $g\in H^{1/2}(\Omega)$. 
We assume that the sought parameter $c^{\dag}$ is in $L^2(\Omega)$. This problem reduces to solving 
$F(c) =u$ if we define the nonlinear operator $F: L^2(\Omega)\rightarrow  L^2(\Omega)$ by $F(c): = u(c)$, 
where $u(c)\in H^1(\Omega)\subset  L^2(\Omega)$ is the unique solution of (\ref{PDE}). This operator $F$
is well defined on
\begin{equation*}
{\mathcal D}: = \left\{ c\in  L^2(\Omega) : \|c-\hat{c}\|_{ L^2(\Omega)}\leq \ep_0 \mbox{ for some }
\hat{c}\geq0,\ \textrm{a.e.}\right\}
\end{equation*}
for some positive constant $\ep_0>0$. It is known that the operator $F$ is weakly closed
and Fr{\'{e}}chet differentiable, the Fr\'{e}chet derivative of $F$ and its adjoint are given by 
$$
F'(c) h = - A(c)^{-1} (h u(c))    \quad \mbox{ and } \quad F'(c)^* w = - u(c) A(c)^{-1} w
$$
for $c\in {\mathcal D}$ and $h, w \in L^2(\Omega)$, where $A(c): H_0^1(\Omega) \to H^{-1}(\Omega)$ 
is defined by $A(c) u = - \triangle u  + c u$ which is an isomorphism uniformly in the ball 
$B_{2\rho}(c^\dag)$ for small $\rho >0$. Moreover, $F$ satisfies Assumption \ref{ass1} (d) with 
a number $\eta>0$ which can be very small if $\rho>0$ is sufficiently small (see \cite{EHN1996}).

\begin{table}[ht]
\caption{Numerical results for elliptic parameter identification.} \label{table3}
    \begin {center}
\begin{tabular}{lllll}
     \hline
$\d$ & method                & iterations \quad & time (s) \qquad & $\|x_{n_\d}^\d - x^\dag\|_{L^2}$ \\ \hline
0.005      & Landweber             & 36         & 6.2252   & 3.0388e-01 \\         
           & AHB: $\beta = 0.99$   & 15         & 2.6239   & 3.0577e-01 \\
           & AHB: $\beta = \infty$ & 12         & 2.1608   & 3.0328e-01 \\ %\hline 
0.001      & Landweber             & 248        & 39.967   & 1.3998e-01 \\
           & AHB: $\beta = 0.99$   & 64         & 10.509   & 1.3772e-01 \\
           & AHB: $\beta = \infty$ & 79         & 12.530   & 1.3790e-01 \\ 
0.0005     & Landweber             & 410        & 66.806   & 1.1268e-01 \\
           & AHB: $\beta = 0.99$   & 108        & 17.023   & 1.1298e-01 \\
           & AHB: $\beta = \infty$ & 165        & 26.356   & 1.1221e-01 \\ %\hline 
0.0001     & Landweber             & 2014       & 341.75   & 8.1312e-02 \\
           & AHB: $\beta = 0.99$   & 236        & 38.464   & 8.1573e-02 \\
           & AHB: $\beta = \infty$ & 375        & 60.491   & 8.1716e-02 \\ % \hline
0.00005    & Landweber             & 4999       & 987.75   & 7.2499e-02  \\
           & AHB: $\beta = 0.99$   & 472        & 74.040   & 7.1581e-02  \\ 
           & AHB: $\beta = \infty$ & 821        & 133.82   & 7.2092e-02  \\ \hline 
\end{tabular}\\[5mm]
\end{center}
\end{table}%%

In our numerical simulation, we consider the two-dimensional problem with $\Omega = [0, 1]\times [0, 1]$ 
and the sought parameter $c^\dag$ is assumed to be a piecewise constant function as shown in 
Figure \ref{ex3_reconstruction} (a). Assuming $u(c^\dag) = x + y$, we add random noise to 
produce noisy data $u^\d$ satisfying $\|u^\d - u(c^\dag)\|_{L^2(\Omega)} = \d$ with various 
noise level $\d > 0$. We will use $u^\d$ to reconstruct $c^\dag$. In order to capture the feature 
of the sought parameter, we take 
\begin{align}\label{TV}
\R(c) = \frac{1}{2\kappa} \|c\|_{L^2(\Omega)}^2 + |c|_{TV}
 \end{align}
with the constant $\kappa = 10$, where $|c|_{TV}$ denotes the total variation of $c$ on $\Omega$, i.e.
$$
|c|_{TV} := \sup\left\{\int_\Omega c \, \mbox{div} \varphi \, \mbox{d}x \mbox{d}y: 
\varphi\in C_0^1(\Omega, {\mathbb R}^2) 
\mbox{ and } \|\varphi\|_{L^\infty(\Omega)} \le 1\right\}.
$$
It is easy to see that this $\R$ satisfies Assumption \ref{ass0} with $\sigma = 1/(2\kappa)$.

\begin{figure}[ht]
\centering
\includegraphics[width = 0.24\textwidth]{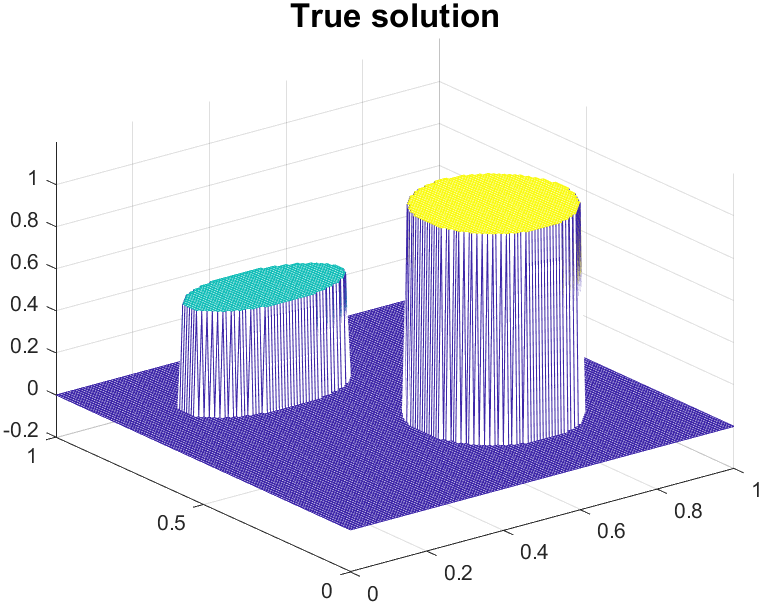}
\includegraphics[width = 0.24\textwidth]{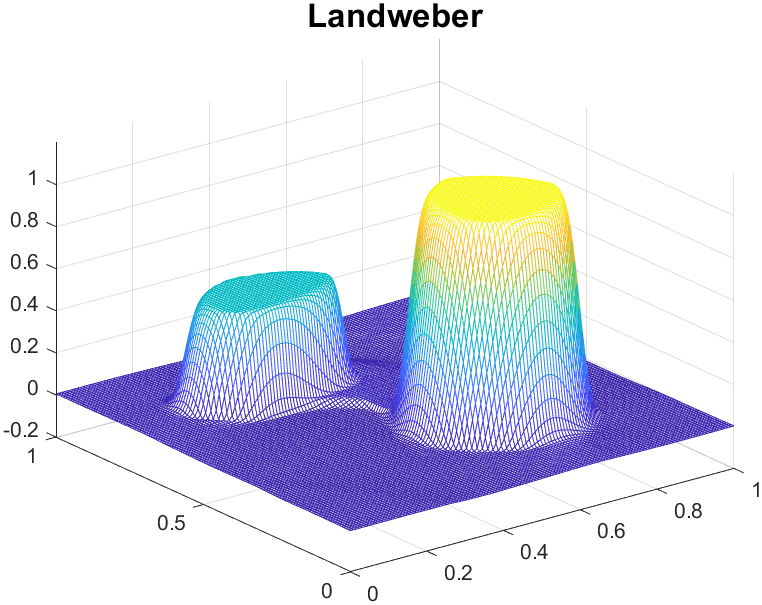}
\includegraphics[width = 0.24\textwidth]{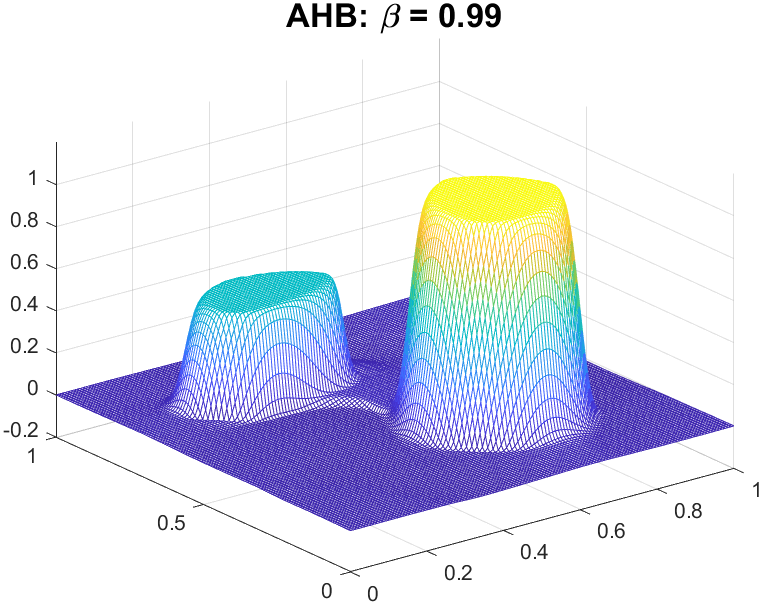}
\includegraphics[width = 0.24\textwidth]{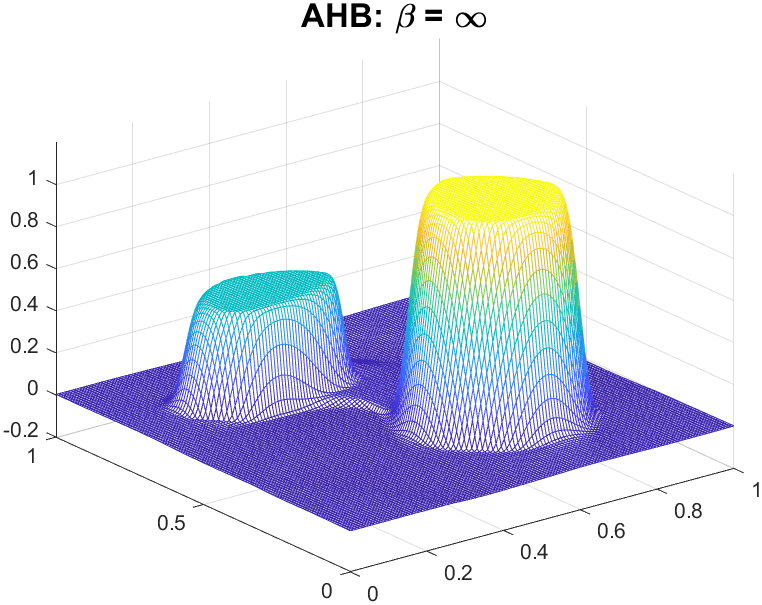}
\caption{The reconstruction results for parameter identification in elliptic equation. 
(a) True parameter; (b) Landweber; (c) AHB with $\beta = 0.99$; (d) AHB with $\beta = \infty$.}
\label{ex3_reconstruction}
\end{figure}

We first carry out the computation by the Landweber-type method (\ref{Land}) using the 
initial guess $\xi_0^\d = 0$ and adaptive step-size $\a_n^\d$ given by (\ref{ass}) with 
$\mu_0 = 1.96(1-\eta - (1+\eta)/\tau)/\kappa$ and $\mu_1 = 80$, where we take $\eta = 0.01$ 
and $\tau = 1.05$. Next we execute our AHB method, i.e. Algorithm \ref{alg:AHB}, with the 
same initial guess $\xi_0^\d$ and step size $\a_n^\d$, the momentum coefficient
$\beta_n^\d$ is determined by using two vules of $\beta$: $\beta = 0.99$ and $\beta = \infty$. 
In order to carry out the computation, we divide $\Omega$ into $128\times 128$ small squares
of equal size and solve all partial differential equations involved approximately by a multigrid 
method (\cite{H2016}) via finite difference discretization. Furthermore, updating $c_n^\d$ 
from $\xi_n^\d$ at each iteration step is equivalent to a total variation denoising problem 
which is solved by the PDHG method after $200$ iterations.

\begin{figure}[ht]
\centering
\includegraphics[width = 0.7\textwidth]{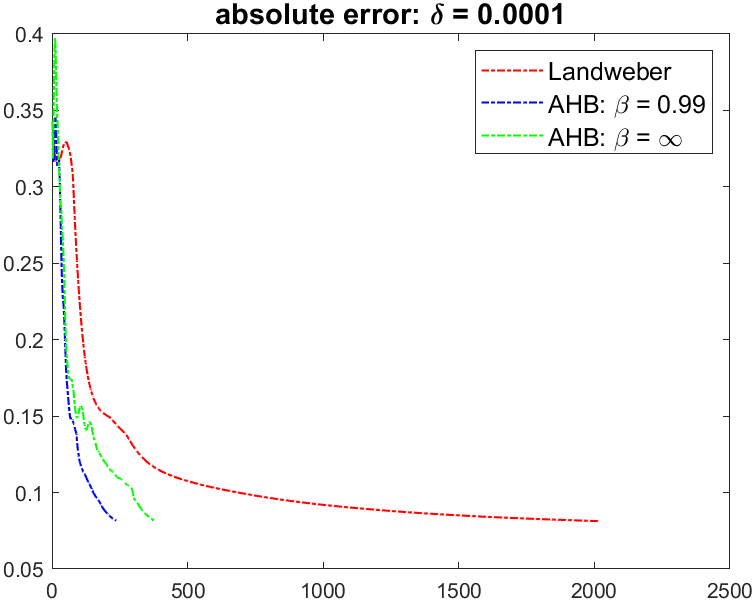}
\caption{relative errors versus the number of iterations.}\label{ex3_err}
			\end{figure}

In Table \ref{table3} we report the computational results by the Landweber iteration (\ref{Land}) 
and our AHB method with $\beta = 0.99$ and $\beta = \infty$, including the required number of 
iterations $n_\d$, the CPU running time and the absolute errors$\|c_{n_\d}^\d - c^\dag\|_{L^2(\Omega)}$, 
using noisy data for various noise level $\d>0$, which clearly demonstrates the acceleration 
effect of our AHB method and shows that our AHB method has superior performance over the Landweber 
iteration. In Figure \ref{ex3_reconstruction} and Figure \ref{ex3_err} we plot the respective 
reconstructed results and absolute errors versus the number of iteration by the Landweber iteration 
and our AHB method using noisy data with noise level $\d = 0.0001$, which shows these methods can 
produce comparable approximate solutions and indicates that our AHB method is much faster than 
the Landweber iteration.
}
\end{example}

\section{\bf Conclusion}

With the surge in machine learning advancements and the emergence of large-scale problems across 
diverse domains, Polyak's heavy ball method has experienced a resurgence of interest within the 
optimization community in recent years. Much effort has been dedicated to comprehending its 
acceleration effects. In this paper we 
proposed an adaptive heavy ball method for solving ill-posed inverse problems, both linear and 
nonlinear. This method differs from the Landweber-type method in that a momentum term is added to 
define the iterates. Our method integrates a strongly convex function into the design of the algorithm 
as a regularization function to detect the desired feature of the sought solution. 
Moreover, we introduced novel techniques for adaptively selecting step-sizes and momentum coefficients, 
aiming to achieve potential acceleration over the Landweber-type method. Notably, the updating formulae 
for these parameters are explicit, obviating the need for a backtracking line search procedure and 
thereby saving computational time. Under the discrepancy principle, we showed that our 
method can be terminated after a finite number of iterations and established the corresponding 
regularization property. We reported diverse numerical results which indicate the significant reduction 
in both the required number of iterations and computational time, and thus demonstrate the superior 
performance of our method over the Landweber-type method.

\section*{\bf Acknowledgement}

\noindent
The work of Q. Jin is partially supported by the Future Fellowship of the Australian Research Council (FT170100231).
The work of Q. Huang is supported by the China Scholarship Council program.

\bibliographystyle{amsplain}

\end{document}